\title[Non-asymptotic theory of random matrices]{Non-asymptotic theory of random \\
    matrices: extreme singular values}
\author[M. Rudelson, R. Vershynin]
{Mark Rudelson\thanks{Partially supported by NSF grant DMS FRG 0652684},
Roman Vershynin\thanks{Partially supported by NSF grant DMS FRG 0918623}}
\newtheorem{theorem}{Theorem}[section]
\newtheorem{proposition}[theorem]{Proposition}
\newtheorem{corollary}[theorem]{Corollary}
\numberwithin{equation}{section}
\theoremstyle{definition}
\newtheorem{definition}[theorem]{Definition}
\theoremstyle{remark}
\newtheorem*{remark}{Remark}
\DeclareMathOperator{\pdf}{pdf}
\DeclareMathOperator{\supp}{supp}
\DeclareMathOperator{\Span}{span}
\DeclareMathOperator{\dist}{dist}
\DeclareMathOperator{\lcd}{lcd}
\DeclareMathOperator{\Median}{Median}
\DeclareMathOperator*{\Ave}{Ave}
\def \R {\mathbb{R}}
\def \C {\mathbb{C}}
\def \Z {\mathbb{Z}}
\def \E {\mathbb{E}}
\def \P {\mathbb{P}}
\def \LL {\mathcal{L}}
\def \NN {\mathcal{N}}
\def \a {\alpha}
\def \b {\beta}
\def \e {\varepsilon}
\def \d {\delta}
\def \l {\lambda}
\def \< {\langle}
\def \> {\rangle}
\def \smin {s_{\min}}
\def \smax {s_{\max}}
\def \Sparse {{\mathit{Sparse}}}
\def \Spread {{\mathit{Spread}}}
\begin{document}

\begin{abstract}
The classical random matrix theory is mostly focused on asymptotic spectral properties
of random matrices as their dimensions grow to infinity. At the same time many recent
applications from convex geometry to functional analysis to information theory
operate with random matrices in fixed dimensions.
This survey addresses the non-asymptotic theory of extreme singular values
of random matrices with independent entries. We focus on recently developed
geometric methods for estimating the hard edge of random matrices (the smallest singular value).
\end{abstract}

\begin{classification}
Primary 60B20; Secondary 46B09
\end{classification}

\begin{keywords}
Random matrices, singular values, hard edge, Littlewood-Offord problem, small ball probability
\end{keywords}

\maketitle

\section{Asymptotic and non-asymptotic problems on random matrices}     \label{s: asymptotic-nonasymptotic}

Since its inception, random matrix theory has been
mostly preoccupied with asymptotic properties of random matrices as
their dimensions grow to infinity. A foundational example of this
nature is {\em Wigner's semicircle law} \cite{Wigner}. It applies to
a family of $n \times n$ symmetric matrices $A_n$ whose entries on
and above the diagonal are independent standard normal random
variables. In the limit as the dimension $n$ grows to infinity, the
spectrum of the normalized matrices $\frac{1}{\sqrt{n}} A_n$ is
distributed according to the semicircle law with density
$\frac{1}{2\pi} \sqrt{4 - x^2}$ supported on the interval $[-2,2]$.
Precisely, if we denote by $S_n (z)$ the number of eigenvalues of
$\frac{1}{\sqrt{n}} A_n$ that are smaller than $z$, then for every
$z \in \R$ one has
$$
\frac{S_n (z)}{n} \to \frac{1}{2\pi} \int_{-\infty}^z (4 - x^2)_+^{1/2} \; dx
\quad \text{almost surely as } n \to \infty.
$$

In a similar way, {\em Marchenko-Pastur law} \cite{Marchenko-Pastur}
governs the limiting spectrum of $n \times n$ Wishart matrices $W_{N,n} = A^* A$, where $A=A_{N,n}$
is an $N \times n$ {\em random Gaussian matrix}  whose entries are independent
standard normal random variables.
As the dimensions $N,n$ grow to infinity
while the aspect ratio $n/N$ converges to a non-random number $y \in (0,1]$,
the spectrum of the normalized Wishart matrices $\frac{1}{N} W_{N,n}$
is distributed according to the Marchenko-Pastur law with density
$\frac{1}{2 \pi x y} \sqrt{(b-x)(x-a)}$ supported on  $[a,b]$
where $a = (1-\sqrt{y})^2$, $b = (1+\sqrt{y})^2$. The meaning
of the convergence is similar to the one in Wigner's semicircle law.

It is widely believed that phenomena typically observed in asymptotic random matrix theory
are {\em universal}, that is independent of the particular distribution of the entries of random matrices.
By analogy with classical probability, when we work with independent standard normal
random variables $Z_i$, we know that their normalized sum $S_n = \frac{1}{\sqrt{n}} \sum_{i=1}^n Z_i$
is again a standard normal random variable. This simple but useful fact becomes significantly more
useful when we learn that it is asymptotically universal.
Indeed, The Central Limit Theorem states that if instead of normal distribution $Z_i$
have general identical distribution with zero mean and unit variance,
the normalized sum $S_n$ will still converge (in distribution) to the standard normal random variable
as $n \to \infty$.
In random matrix theory, universality has been established for many results.
In particular, Wigner's semicircle law and Marchenko-Pastur law are known to be universal --
like the Central Limit Theorem,
they hold for arbitrary distribution of entries with zero mean and unit variance
(see \cite{Pastur, BS} for semi-circle law and \cite{Watcher, Bai 99} for Marchenko-Pastur law).

Asymptotic random matrix theory offers remarkably precise predictions
as dimension grows to infinity. At the same time, sharpness at infinity is
often counterweighted by lack of understanding of what happens in finite dimensions.
Let us briefly return to the analogy with the Central Limit Theorem.
One often needs to estimate the sum of independent random variables $S_n$ with fixed
number of terms $n$ rather than in the limit $n \to \infty$. In this situation one may turn to
Berry-Esseen's theorem which quantifies deviations of the distribution of $S_n$ from that of the standard
normal random variable $Z$. In particular, if $\E |Z_1|^3 = M < \infty$ then
\begin{equation}                            \label{Berry-Esseen}
|\P (S_n \le z) - \P (Z \le z)| \le \frac{C}{1+|z|^3} \cdot \frac{M}{\sqrt{n}},
\quad z \in \R,
\end{equation}
where $C$ is an absolute constant \cite{Berry, Esseen}.
Notwithstanding the optimality of Berry-Esseen inequality \eqref{Berry-Esseen},
one can still hope for something better than the polynomial bound on the probability,
especially in view of the super-exponential tail of the limiting normal distribution:
$\P (|Z| > z) \lesssim \exp(-z^2/2)$. Better estimates would indeed emerge in the form of
{\em exponential deviation inequalities} \cite{Petrov, Ledoux concentration},
but this would only happen when we
drop explicit comparisons to the limiting distribution and study the tails of $S_n$ by themselves.
In the simplest case, when $Z_i$ are i.i.d. mean zero random variables bounded in absolute value by $1$,
one has
\begin{equation}                \label{deviation}
\P (|S_n| > z) \le 2 \exp(- c z^2),
\quad z \ge 0,
\end{equation}
where $c$ is a positive absolute constant.
Such exponential deviation inequalities, which are extremely useful in a number of applications,
are non-asymptotic results
whose asymptotic prototype is the Central Limit Theorem.

A similar non-asymptotic viewpoint can be adopted in random matrix theory.
One would then study spectral properties of random matrices of fixed dimensions.
Non-asymptotic results on random matrices are in demand in a number of today's applications that operate
in high but fixed dimensions. This usually happens in statistics where one analyzes data sets
with a large but fixed number of parameters, in geometric functional analysis where one
works with random operators on finite-dimensional spaces (whose dimensions are large
but fixed), in signal processing where the signal is randomly sampled in many but fixed number of points,
and in various other areas of science and engineering.

This survey is mainly focused on the non-asymptotic theory of the extreme singular
values of random matrices (equivalently, the extreme eigenvalues
of sample covariance matrices) where significant progress was made recently.
In Section~\ref{s: extreme} we review estimates on the largest singular value (the soft edge).
The more difficult problem of estimating the smallest singular value (the hard edge) is discussed
in Section~\ref{s: smallest}, and its connection with the Littlewood-Offord problem in additive
combinatorics is the content of Section~\ref{s: Littlewood-Offord}.
In Section~\ref{s: applications} we discuss several applications of non-asymptotic results
to the circular law in asymptotic random matrix theory,
to restricted isometries in compressed sensing,
and to Kashin's subspaces in geometric functional analysis.

This paper is by no means a comprehensive survey of the area but rather a tutorial.
Sketches of some arguments are included in order to give the reader a flavor of non-asymptotic
methods. To do this more effectively, we state most theorems in simplified form (e.g. always over the field $\R$);
the reader will find full statements in the original papers.
Also, we had to completely omit several important directions.
These include random symmetric matrices which were the subject of the recent
survey by Ledoux~\cite{Ledoux-Tracy-Widom} and random matrices with independent columns,
see in particular \cite{ALPT, V covariance}. The reader is also encouraged to look at the comprehensive
survey \cite{DS} on some geometric aspects of random matrix theory.

\section{Extreme singular values}                   \label{s: extreme}

\paragraph{Geometric nature of extreme singular values}
The non-asymptotic viewpoint in random matrix theory is largely motivated
by geometric problems in high dimensional Euclidean spaces.
When we view an $N \times n$ matrix $A$ as a linear operator
$\R^n \to \R^N$, we may want first of all to control its magnitude
by placing useful upper and lower bounds on $A$.
Such bounds are conveniently provided by the
smallest and largest singular values of $A$ denoted $\smin(A)$ and $\smax(A)$;
recall that the singular values are by definition the eigenvalues of $|A| = \sqrt{A^*A}$.

The geometric meaning of the extreme singular values can be clear
by considering the best possible factors $m$ and $M$ in the two-sided inequality
$$
m \|x\|_2 \le \|Ax\|_2 \le M \|x\|_2
\quad \text{for all } x \in \R^n.
$$
The largest $m$ and the smallest $M$ are precisely the
extreme singular values $\smin(A)$ and $\smax(A)$ respectively.
They control the distortion of the Euclidean geometry under the action of
the linear transformation $A$; the distance between any two points in $\R^n$
can increase by at most the factor $\smax(A)$ and decrease
by at most the factor $\smax(A)$.
The extreme singular values are clearly related to the operator norms of the
linear operators $A$ and $A^{-1}$ acting between Euclidean spaces:
$\smax(A) = \|A\|$  and if $A$ is invertible then $\smin(A) = 1/\|A^{-1}\|$.

Understanding the behavior of extreme singular values of random matrices is needed in many applications.
In numerical linear algebra, the {\em condition number} $\kappa(A) = \smax(A)/\smin(A)$ often
serves as a measure of stability of matrix algorithms.
Geometric functional analysis employs probabilistic constructions of linear operators as random matrices,
and the success of these constructions often depends on good bounds on the norms of these operators and
their inverses. Applications of different nature arise in statistics from the analysis of {\em sample covariance matrices} $A^*A$,
where the rows of $A$ are formed by $N$ independent samples of some unknown distribution in $\R^n$.
Some other applications are discussed in Section~\ref{s: applications}.

\paragraph{Asymptotic behavior of extreme singular values}
We first turn to the asymptotic theory for the extreme singular
values of random matrices with independent entries (and with zero mean
and unit variance for normalization purposes). From
Marchenko-Pastur law we know that most singular values of such
random $N \times n$ matrix $A$ lie in the interval
$[\sqrt{N}-\sqrt{n}, \sqrt{N} + \sqrt{n}]$. Under mild additional
assumptions, it is actually true that {\em all} singular values lie
there, so that asymptotically we have
\begin{equation}                            \label{asymptotics}
\smin(A) \sim \sqrt{N} - \sqrt{n}, \quad
\smax(A) \sim \sqrt{N} + \sqrt{n}.
\end{equation}
This fact is universal and it holds for general distributions.
This was established for $\smax(A)$ by Geman \cite{Geman}
and Yin, Bai and Krishnaiah \cite{YBK}. For $\smin(A)$,
Silverstein \cite{Silverstein} proved this for Gaussian random matrices,
and Bai and Yin \cite{Bai-Yin}
gave a unified treatment of both extreme singular values for general distributions:

\begin{theorem}[Convergence of extreme singular values, see \cite{Bai-Yin}]         \label{BY}
Let $A = A_{N,n}$ be an $N \times n$ random matrix whose entries
are independent copies of some random variable with zero mean, unit variance,
and finite fourth moment. Suppose that the dimensions $N$ and $n$ grow to infinity
while the aspect ratio $n/N$ converges to some number $y \in (0,1]$.
Then
$$
\frac{1}{\sqrt{N}} \, \smin(A) \to 1-\sqrt{y}, \quad
\frac{1}{\sqrt{N}} \, \smax(A) \to 1+\sqrt{y} \quad
\text{almost surely}.
$$
Moreover, without the fourth moment assumption the sequence
$\frac{1}{\sqrt{N}} \, \smax( A)$ is almost surely unbounded \cite{BSY}.
\end{theorem}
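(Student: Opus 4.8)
The plan is to pass to the symmetric positive semidefinite Wishart matrix $W = A^* A$, whose eigenvalues are the squares of the singular values of $A$, and to prove that $\l_{\min}(\tfrac1N W) \to a := (1-\sqrt y)^2$ and $\l_{\max}(\tfrac1N W) \to b := (1+\sqrt y)^2$ almost surely. Two of the four required inequalities come for free from the Marchenko--Pastur law: since the empirical spectral distribution of $\tfrac1N W$ converges a.s.\ to the Marchenko--Pastur distribution, which puts positive mass in every neighbourhood of both endpoints of its support $[a,b]$, a positive fraction of eigenvalues accumulates near each endpoint, so $\liminf_N \l_{\max}(\tfrac1N W) \ge b$ and $\limsup_N \l_{\min}(\tfrac1N W) \le a$ almost surely. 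When $y = 1$ the second of these, combined with positive semidefiniteness, already yields $\l_{\min}(\tfrac1N W) \to 0$; so the genuinely new work below — the lower bound on the smallest eigenvalue — is needed only for $y < 1$.

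The bound $\limsup_N \l_{\max}(\tfrac1N W) \le b$ I would get by the moment (trace) method: starting from $\l_{\max}(\tfrac1N W)^k \le \operatorname{tr}\big((\tfrac1N W)^k\big)$, expand $\E\operatorname{tr}\big((\tfrac1N W)^k\big)$ as a sum over closed walks of length $2k$ on the complete bipartite graph on $N + n$ vertices, weighted by products of entry moments. The dominant contribution comes from tree-like walks in which every edge is traversed exactly twice; using unit variance this contribution equals $n\, b^k(1+o(1))$ (the $b^k$ being the trivial bound $\int x^k\,d\mathrm{MP}_y \le b^k$ on the Marchenko--Pastur moments), while a walk with a repeated edge loses a power of $N$ and is weighted only by the fourth moment of the entries, so it is negligible once $k = k_N$ grows slowly enough. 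Entries exceeding $\d_N\sqrt N$ are removed in advance by truncation, the error of the truncation being controlled by a further fourth-moment estimate, with the ensuing re-centring and re-scaling not affecting the limit — this is exactly where the bounded-fourth-moment hypothesis is used, and it is balanced against the growth of $k_N$. Feeding $\E\operatorname{tr} \le n\,b^k(1+o(1))$ into Markov's inequality and summing over $N$ gives, via Borel--Cantelli, $\limsup_N \l_{\max}(\tfrac1N W) \le b + \e$ almost surely for every $\e > 0$, hence the claim; together with the easy half this proves the statement for $\smax(A)$.

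For the smallest eigenvalue with $y < 1$ one cannot apply the trace method directly, because $\l_{\min}$ sits at an \emph{interior} point of the relevant spectrum. The standard device is to study the shifted matrix $M_N = (b+\e) I - \tfrac1N W$, which by the previous paragraph is positive semidefinite for large $N$ (a.s.) and satisfies $\l_{\min}(\tfrac1N W) = (b+\e) - \l_{\max}(M_N)$. Its limiting spectral distribution is the reflected Marchenko--Pastur law on $[\e,\, b-a+\e]$, and the moment method applied to $M_N$ should give $\limsup_N \l_{\max}(M_N) \le b - a + \e$ almost surely, whence $\liminf_N \l_{\min}(\tfrac1N W) \ge a$ and, letting $\e \downarrow 0$, the convergence $\tfrac1{\sqrt N}\smin(A) \to 1-\sqrt y$. \textbf{The main obstacle} is precisely this last moment computation: $M_N$ does not have independent entries (each entry of $\tfrac1N W$ is a sum over the shared column index), so one expands $\E\operatorname{tr}(M_N^k) = \sum_j \binom{k}{j}(b+\e)^{k-j}(-1/N)^j\,\E\operatorname{tr}(W^j)$ and now needs the \emph{exact} leading-order asymptotics of $\E\operatorname{tr}(W^j)$ — the Narayana (Fuss--Catalan) moments of the Marchenko--Pastur law — together with enough control on the correction terms that, after the alternating, binomially weighted sum, the bound collapses from the trivial $(b+\e)^k$ to $(b-a+\e)^k(1+o(1))$. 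Securing this cancellation while only a fourth moment is assumed is the delicate core of the argument.

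Finally, for the ``moreover'' part: always $\max_{i,j}|A_{ij}| = \max_{i,j}|\langle e_i, A e_j\rangle| \le \smax(A)$, and if $\E|\xi|^4 = \infty$ then $\sum_k \P(|\xi| > k^{1/4}) = \infty$, which after reindexing gives $\sum_N N\,\P(|\xi| > K\sqrt N) = \infty$ for every constant $K$; a Borel--Cantelli argument over disjoint blocks of the entries then produces, for every $K$, some entry of $A_{N,n}$ exceeding $K\sqrt N$ infinitely often, so $\tfrac1{\sqrt N}\smax(A)$ is almost surely unbounded.
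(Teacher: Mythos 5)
The paper does not prove Theorem~\ref{BY}; it is quoted as a known result of Bai--Yin (with the last assertion from Bai--Silverstein--Yin), and the text immediately moves on to the Tracy--Widom fluctuations. So there is no in-paper proof to compare yours against, and the right benchmark is the original \cite{Bai-Yin, YBK, BSY}. Measured that way, your outline is the standard moment-method route: truncation at level $\delta_N\sqrt N$ to exploit the fourth moment, closed-walk expansion of $\E\operatorname{tr}\big((\tfrac1N W)^{k_N}\big)$ with tree walks dominant, Markov plus Borel--Cantelli for $\limsup\l_{\max}\le b$, and weak convergence of the ESD to Marchenko--Pastur supplying the matching lower bound $\liminf\l_{\max}\ge b$ and the upper bound $\limsup\l_{\min}\le a$ for free. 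The ``moreover'' clause is also handled correctly: $\E|\xi|^4=\infty$ is equivalent to divergence of $\sum_N N\,\P(|\xi|>K\sqrt N)$ for every $K$, and a second Borel--Cantelli over disjoint blocks of entries gives entries of size $\gg\sqrt N$ infinitely often, hence $\smax(A)\ge\max_{ij}|A_{ij}|$ is a.s.\ unbounded after normalization.

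One substantive comment on the step you honestly flag as ``the delicate core.'' Your proposed shift is $M_N=(b+\e)I-\tfrac1N W$, which places the spectrum of $M_N$ (approximately) in $[\e,\,b-a+\e]$, an \emph{asymmetric} interval. Bai--Yin instead center symmetrically: they study $T_N=\tfrac1N W-(1+y_N)I$ with $y_N=n/N$, whose limiting spectrum is $[-2\sqrt y,\,2\sqrt y]$, and prove $\limsup\|T_N\|\le 2\sqrt y$ by the trace method. The symmetric centering is not cosmetic: for the alternating binomial expansion $\E\operatorname{tr}(T_N^{2k})=\sum_j\binom{2k}{j}(-(1+y_N))^{2k-j}N^{-j}\E\operatorname{tr}(W^j)$ to collapse from the trivial $(1+y_N)^{2k}$ down to $(2\sqrt y+\e)^{2k}$, one must exploit that odd-order contributions pair off, which the symmetric shift makes transparent; with the shift $(b+\e)$ the cancellation still holds in principle, but the combinatorial bookkeeping becomes heavier and the argument is harder to close under a bare fourth-moment assumption. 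I'd recommend adopting the symmetric centering if you intend to actually execute this step, and I'd also note that once $\|T_N\|\to 2\sqrt y$ is established, \emph{both} $\liminf\l_{\min}\ge a$ and $\limsup\l_{\max}\le b$ fall out at once, so your separate trace argument for $\l_{\max}$ becomes redundant.

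Two minor points. First, the ``free'' inequalities from the MP law require a.s.\ weak convergence of the ESD, not just convergence in probability; under your hypotheses this is available, but it is worth saying. Second, the estimate $\E\operatorname{tr}\big((\tfrac1N W)^{k}\big)\le n\,b^k(1+o(1))$ should really carry a polynomial-in-$k$ prefactor coming from the number of tree shapes (Narayana/Fuss--Catalan counts); this is harmless for the Borel--Cantelli step provided $k_N$ grows like a slowly varying function of $N$, but writing it as an equality ``$=n\,b^k(1+o(1))$'' is misleading.
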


The limiting distribution of the extreme singular values is known and universal.
It is given by the {\em Tracy-Widom law} whose
cumulative distribution function is
\begin{equation}                        \label{F1}
F_1(x) = \exp \Big( - \int_x^\infty \big[ u(s) + (s-x) u^2(s) \big] \; ds \Big),
\end{equation}
where $u(s)$ is the solution to the Painlev\`e II equation $u'' =
2u^3 + su$ with the asymptotic $u(s) \sim \frac{1}{2 \sqrt{\pi}
s^{1/4}} \exp(-\frac{2}{3} s^{3/2})$ as $s \to \infty$. The
occurrence of Tracy-Widom law in random matrix theory and several
other areas was the subject of an ICM 2002 talk of Tracy and Widom
\cite{Tracy-Widom}. This law was initially discovered for the
largest eigenvalue of a Gaussian symmetric matrix
\cite{Tracy-Widom1, Tracy-Widom2}. For the largest singular values
of random matrices with independent entries it was established by
Johansson \cite{Johansson} and Johnstone \cite{Johnstone} in the
Gaussian case, and by Soshnihikov \cite{Soshnikov} for more general
distributions. For the smallest singular value, the corresponding
result was recently obtained in a recent work Feldheim and Sodin
\cite{Feldheim-Sodin} who gave a unified treatment of both extreme
singular values. These results are known under a somewhat stronger
subgaussian moment assumption on the entries $a_{ij}$ of $A$, which
requires their distribution to decay as fast as the normal random
variable:

\begin{definition}[Subgaussian random variables]
A random variable $X$ is {\em subgaussian} if
there exists $K>0$ called the subgaussian moment of $X$ such that
$$
\P (|X| > t) \le 2 e^{-t^2/K^2} \quad \text{for } t > 0.
$$
\end{definition}

Examples of subgaussian random variables include
normal random variables, $\pm 1$-valued, and generally,
all bounded random variables.
The subgaussian assumption is equivalent to the moment growth condition
$(\E|X|^p)^{1/p} = O(\sqrt{p})$ as $p \to \infty$.

\begin{theorem}[Limiting distribution of extreme singular values, see \cite{Feldheim-Sodin}]        \label{FS}
Let $A = A_{N,n}$ be an $N \times n$ random matrix whose entries
are  independent and identically distributed subgaussian random variables
with zero mean and unit variance.
Suppose that the dimensions $N$ and $n$ grow to infinity
while the aspect ratio $n/N$ stays uniformly bounded by some number $y \in (0,1)$.
Then the normalized extreme singular values
$$
\frac{\smin(A)^2 - (\sqrt{N} - \sqrt{n})^2}{(\sqrt{N} - \sqrt{n})(1/\sqrt{n} - 1/\sqrt{N})^{1/3}}
\quad \text{and} \quad
\frac{\smax(A)^2 - (\sqrt{N} + \sqrt{n})^2}{(\sqrt{N} + \sqrt{n})(1/\sqrt{n} + 1/\sqrt{N})^{1/3}}
$$
converge in distribution to the Tracy-Widom law \eqref{F1}.
\end{theorem}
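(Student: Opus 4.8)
The plan is to follow the moment-method strategy of Soshnikov as adapted by Feldheim and Sodin, rather than the determinantal-kernel approach available only in the Gaussian case. The point is that the fluctuations of $\smax(A)^2$ around $(\sqrt N+\sqrt n)^2$ are governed by the high moments of the empirical spectral distribution of the Wishart matrix $W=W_{N,n}=A^*A$. Concretely, one studies $\E\,\mathrm{tr}\,(W/ (\sqrt N+\sqrt n)^2)^{k}$ along a scaling $k\sim t\cdot n^{2/3}(1/\sqrt n+1/\sqrt N)^{-1/3}$ as $n,N\to\infty$; the claim is that after centering these traces reproduce, term by term, the moments of the Tracy--Widom distribution $F_1$, which are themselves finite and determine $F_1$ uniquely. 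Since the hypotheses put us in the regime $n/N\le y<1$, the soft edge is genuinely separated from zero, so Theorem~\ref{BY} already localizes $\smax(A)\approx\sqrt N+\sqrt n$ and $\smin(A)\approx\sqrt N-\sqrt n$, and the task is purely to resolve the $n^{-2/3}$-scale fluctuation on top of that.

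First I would set up the combinatorial expansion: $\E\,\mathrm{tr}\,W^k$ is a sum over closed walks of length $2k$ on the bipartite structure coming from $A$ (alternating row- and column-indices), with each edge contributing an entry $a_{ij}$ and its conjugate. Grouping walks by the multigraph they trace out, the dominant contribution in the $n\to\infty$ limit comes from walks whose underlying graph is a tree traversed so that each edge is used exactly twice — these are counted by Catalan-type numbers and give the Marchenko--Pastur moments, hence the location of the edge at $(\sqrt N+\sqrt n)^2$. The subleading corrections, organized by the first Betti number (number of independent cycles) of the walk's graph and by the number of ``exceptional'' vertices where the tree-traversal structure is violated, are exactly what survives in the $k\sim n^{2/3}$ window and must be summed to produce the Painlev\'e~II asymptotics encoded in \eqref{F1}. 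For $\smin(A)$ one runs the parallel expansion for the {\em lower} edge: because $n/N$ is bounded away from $1$ the density $\frac{1}{2\pi x y}\sqrt{(b-x)(x-a)}$ has a genuine hard edge at $a=(1-\sqrt y)^2>0$, so one can study $\E\,\mathrm{tr}\,(W-c\cdot\mathrm{Id})^k$ for a suitable shift, or equivalently apply the same machinery to the ``reflected'' spectral edge; Feldheim and Sodin handle both edges by a single unified combinatorial scheme, and I would import that.

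The subgaussian hypothesis enters precisely in controlling the error terms of this expansion. Each exceptional vertex of a walk forces a high-degree interaction among a few $a_{ij}$'s, producing a factor $\E |a_{ij}|^{2p}$ for some $p$ growing with $k$; the bound $(\E|a_{ij}|^{2p})^{1/p}=O(p)$ (equivalently $(\E|X|^p)^{1/p}=O(\sqrt p)$) that characterizes subgaussian variables is exactly strong enough to keep these contributions from blowing up when $k\sim n^{2/3}$, while weaker moment assumptions would lose the scale. One then needs a truncation or smoothing step — replace the entries by bounded versions without changing the limit — together with a concentration argument (a variance bound on the traces) to upgrade convergence of moments to convergence in distribution of $\smax(A)$ itself, and a sandwiching argument to transfer from moments of $W$ to the extreme eigenvalue.

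The main obstacle, and the genuinely hard part, is the matching combinatorial estimate: showing that the sum over non-tree walks, graded by cycle number and by exceptional vertices, converges as $n\to\infty$ and $k\to\infty$ simultaneously (with $k$ tied to the $n^{2/3}$ scaling) to precisely the moment generating function of $F_1$. This requires a delicate accounting of how many walks of each combinatorial type there are versus how small each one is — the two effects are in near-perfect balance, which is why the answer is a nontrivial universal law rather than a Gaussian — and it is exactly here that one must reproduce the Airy-kernel/Painlev\'e structure purely from walk counting. I would isolate this as the core lemma and cite \cite{Soshnikov, Feldheim-Sodin} for the technically heaviest parts rather than reprove them.
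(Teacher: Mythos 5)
The paper states this theorem purely as a citation to \cite{Feldheim-Sodin} and gives no proof of its own. Your sketch correctly identifies the Soshnikov-style moment-method strategy that Feldheim and Sodin actually use -- trace combinatorics of $\mathrm{tr}\,W^{k}$ at the scale $k\sim n^{2/3}$, graded by tree-versus-cycle structure of closed walks, with the subgaussian moment growth controlling the error from exceptional vertices and a unified combinatorial scheme treating both spectral edges -- which matches the brief description the survey itself gives of their method in the subsequent paragraph on Tracy--Widom fluctuations.
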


\paragraph{Non-asymptotic behavior of extreme singular values}
It is not entirely clear to what extent the limiting behavior
of the extreme singular values such as asymptotics \eqref{asymptotics}
manifests itself in fixed dimensions. Given the geometric meaning
of the extreme singular values, our interest generally lies in establishing
correct upper bounds on $\smax(A)$ and lower bounds on $\smin(A)$.
We start with a folklore observation which yields the correct bound
$\smax(A) \lesssim \sqrt{N} + \sqrt{n}$ up to an absolute constant factor. The proof is a basic instance of
an {\em $\e$-net argument}, a technique proved to be very useful in geometric functional
analysis.

\begin{proposition}[Largest singular value of subgaussian matrices: rough bound]    \label{smax rough}
Let $A$ be an $N \times n$ random matrix whose entries
are independent mean zero subgaussian random variables
whose subgaussian moments are bounded by $1$.
Then
$$
\P \big(\smax(A) > C (\sqrt{N} + \sqrt{n}) + t \big) \le 2 e^{-c t^2},
\quad t \ge 0.
$$
Here and elsewhere in this paper, $C, C_1, c, c_1$ denote positive absolute constants.
\end{proposition}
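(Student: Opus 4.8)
The plan is to run the standard $\e$-net argument. Write $\smax(A) = \sup_{x \in S^{n-1}} \|Ax\|_2 = \sup_{x \in S^{n-1},\, y \in S^{N-1}} \langle Ax, y\rangle$. First I would discretize both spheres: fixing $\e = 1/4$, choose $\e$-nets $\NN \subset S^{n-1}$ and $\MM \subset S^{N-1}$, which by a volumetric (packing) bound can be taken with $|\NN| \le 9^n$ and $|\MM| \le 9^N$. A routine approximation step shows $\smax(A) \le (1-2\e)^{-1} \max_{x \in \NN,\, y \in \MM} \langle Ax, y\rangle = 2 \max_{x \in \NN,\, y \in \MM} \langle Ax, y\rangle$, so it suffices to bound the maximum over the net.

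Second, I would prove the scalar deviation inequality for a fixed pair of unit vectors $x, y$. The random variable $\langle Ax, y\rangle = \sum_{i \le N,\, j \le n} a_{ij} x_j y_i$ is a sum of independent mean-zero subgaussian terms, the term $a_{ij} x_j y_i$ having subgaussian moment $\lesssim |x_j y_i|$. Using the fact that a sum of independent mean-zero subgaussian random variables is again subgaussian, with moment controlled by the $\ell_2$-aggregate of the individual moments, one gets that $\langle Ax, y\rangle$ is subgaussian with moment $\lesssim \big(\sum_{i,j} x_j^2 y_i^2\big)^{1/2} = \|x\|_2\|y\|_2 = 1$. Hence $\P(\langle Ax, y\rangle > s) \le 2 e^{-c_0 s^2}$ for all $s \ge 0$ with $c_0$ absolute. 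This is the only genuinely probabilistic ingredient, and the normalization $\|x\|_2 = \|y\|_2 = 1$ is exactly what makes the subgaussian moment of the sum an absolute constant.

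Third, I would take the union bound over the net and optimize. Since
$$
\P\Big( \max_{x \in \NN,\, y \in \MM} \langle Ax, y\rangle > s \Big) \le |\NN|\,|\MM| \cdot 2 e^{-c_0 s^2} \le 2 \cdot 9^{N+n} e^{-c_0 s^2},
$$
putting $s = C(\sqrt N + \sqrt n) + t$ and using $s^2 \ge C^2(N+n) + t^2$ bounds the right-hand side by $2\exp\big((\ln 9)(N+n) - c_0 C^2(N+n) - c_0 t^2\big)$. Choosing the absolute constant $C$ so large that $c_0 C^2 \ge \ln 9$ makes the first two terms in the exponent nonpositive, leaving $2 e^{-c_0 t^2}$. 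Feeding this back through the factor $2$ from the approximation step (which merely rescales $C$ and $t$, hence the constant $c$) gives the claimed bound.

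There is no serious obstacle here; the one point to watch is the bookkeeping in the last step, i.e.\ checking that a single absolute $C$ makes the exponential concentration gain $e^{-c_0 C^2(N+n)}$ dominate the entropy cost $9^{N+n}$ of the two nets for all dimensions simultaneously, and that after passing from the net maximum back to $\smax(A)$ the residual tail in $t$ still comes out as $e^{-c t^2}$.
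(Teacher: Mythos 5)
Your proof is correct and follows essentially the same $\e$-net argument that the paper gives (the paper sketches the square case $N=n$ with a single net and $\e=1/2$, while you treat the general rectangular case with nets on both $S^{n-1}$ and $S^{N-1}$ and $\e=1/4$, but these are cosmetic differences in the bookkeeping). All three ingredients — the scalar subgaussian tail for $\langle Ax, y\rangle$ via the $\ell_2$-aggregation of subgaussian moments, the volumetric bound on the net cardinalities, and the choice of $C$ large enough that the concentration gain $e^{-c_0 C^2(N+n)}$ beats the entropy cost $9^{N+n}$ — match the paper's reasoning.
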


\begin{proof}[Proof (sketch)]
We will sketch the proof for $N=n$; the general case is similar. The
expression $\smax(A) = \max_{x,y \in S^{n-1}} \< Ax, y \> $
motivates us to first control the random variables $\< Ax, y \> $
individually for each pair of vectors $x, y$ on the unit Euclidean
sphere $S^{n-1}$, and afterwards take the union bound over all such
pairs. For fixed $x,y \in S^{n-1}$ the expression $\< Ax, y \> =
\sum_{i,j} a_{ij} x_j y_i$ is a sum of independent random variables,
where $a_{ij}$ denote the independent entries of $A$. If $a_{ij}$
were standard normal random variables, the rotation invariance of
the Gaussian distribution would imply that $\< Ax, y \> $ is again a
standard normal random variable. This property generalizes to
subgaussian random variables. Indeed, using moment generating
functions one can show that a normalized sum of mean zero
subgaussian random variables is again a subgaussian random variable,
although the subgaussian moment may increase by an absolute constant
factor. Thus
$$
\P \big( \< Ax, y \> > s \big) \le 2 e^{-cs^2},
\quad s \ge 0.
$$

Obviously, we cannot finish the argument by taking the union bound
over infinite (even uncountable) number of pairs $x,y$ on the sphere
$S^{n-1}$. In order to reduce the number of such pairs, we
discretize $S^{n-1}$ by considering its $\e$-net $\NN_\e$ in the
Euclidean norm, which is a subset of the sphere that approximates
every point of the sphere up to error $\e$. An approximation
argument yields
$$
\smax(A) = \max_{x,y \in S^{n-1}} \< Ax, y \>
\le (1-\e)^{-2} \max_{x,y \in \NN_\e} \< Ax, y \>
\quad \text{for } \e \in (0,1).
$$
To gain a control over the size of the net $\NN_\e$, we construct it
as a maximal $\e$-separated subset of $S^{n-1}$; then the balls with
centers in $\NN_\e$ and radii $\e/2$ form a packing inside the
centered ball of radius $1+\e/2$. A volume comparison gives the
useful bound on the cardinality of the net: $|\NN_\e| \le
(1+2/\e)^n$. Choosing for example $\e = 1/2$, we are well prepared
to take the union bound:
\begin{align*}
\P \big( \smax(A) > 4s \big)
\le \P \big( \max_{x,y \in \NN_\e} \< Ax, y \> > s \big)
\le |\NN_\e| \max_{x,y \in \NN_\e} \P \big( \< Ax, y \> > s \big)
  \le 5^n \cdot 2 e^{-cs^2}.
\end{align*}
We complete the proof by choosing $s = C\sqrt{n} + t$ with appropriate constant $C$.
\end{proof}

By integration, one can easily deduce from Proposition~\ref{smax rough}
the correct expectation bound
$\E \smax(A) \le C_1 (\sqrt{N} + \sqrt{n})$. This latter bound
actually holds under much weaker moment assumptions.
Similarly to Theorem~\ref{BY}, the weakest possible fourth moment assumption
suffices here.
R.~Latala \cite{Latala} obtained the following general result for matrices with
not identically distributed entries:

\begin{theorem}[Largest singular value: fourth moment, non-iid entries \cite{Latala}]                 \label{Latala}
Let $A$ be a random matrix whose entries $a_{ij}$ are independent mean zero
random variables with finite fourth moment. Then
$$
\E \smax(A) \le C \Big[ \max_i \big( \sum_j \E a_{ij}^2 \big)^{1/2}
  +  \max_j \big( \sum_i \E a_{ij}^2 \big)^{1/2}
  +  \big( \sum_{i,j} \E a_{ij}^4 \big)^{1/4}  \Big].
$$
\end{theorem}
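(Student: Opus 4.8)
The plan is to use the method of moments (the trace method), the standard route to norm bounds for random matrices, which already underlies the $\e$-net estimate of Proposition~\ref{smax rough}. Fix a positive integer $p$, to be taken of order $\log \min(N,n)$. Then $\E \smax(A) \le (\E \smax(A)^{2p})^{1/2p} \le (\E \, \mathrm{tr}((AA^*)^p))^{1/2p}$, so it suffices to bound the $p$-th moment of the trace. Expanding, $\E \, \mathrm{tr}((AA^*)^p)$ is a sum over closed walks of length $2p$ in the bipartite graph whose vertex classes are the row and column indices; such a walk visits row-vertices $i_1, i_2, \dots$ and column-vertices $j_1, j_2, \dots$ and contributes $\E[ a_{i_1 j_1} a_{i_2 j_1} a_{i_2 j_2} \cdots ]$. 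Since the entries are independent and mean zero, any walk traversing some edge exactly once contributes $0$, so only walks in which every distinct edge appears at least twice survive; such a walk uses at most $e \le p$ distinct edges.

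Before counting, one must handle the heavy tails: with only four moments available, a walk traversing an edge five or more times has no a priori controlled contribution. I would first symmetrize (replace $A$ by $A - A'$ with $A'$ an independent copy — this at most doubles the norm and changes the moments by absolute constants) and then truncate each entry at level $M$ proportional to $f := (\sum_{i,j} \E a_{ij}^4)^{1/4}$, writing $A = U + W$ with $U$ the truncated part. The truncated-off part is dispatched crudely: $\E \smax(W) \le \E \|W\|_{\mathrm{HS}} \le (\sum_{i,j} \E w_{ij}^2)^{1/2} \le (\sum_{i,j} \E a_{ij}^4 / M^2)^{1/2} = f^2/M \lesssim f$. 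For $U$, whose entries are bounded by $M \asymp f$, one has $\E |u_{ij}|^m \le (Cf)^{m-2} \E a_{ij}^2$ for every $m \ge 2$; hence a surviving walk in which the distinct edge $\{i,j\}$ is traversed $m_{ij}$ times (so $\sum m_{ij} = 2p$) contributes at most $(Cf)^{2p - 2e} \prod_{\{i,j\}} \E a_{ij}^2$.

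It remains to sum over the assignments of indices to the walk's vertices, and over walk types. Running a depth-first search on a spanning tree of the walk's underlying graph and peeling leaves one by one, each peeling step replaces a leaf vertex by a row-sum $\sum_j \E a_{ij}^2 \le r^2$ or a column-sum $\sum_i \E a_{ij}^2 \le c^2$, where $r := \max_i (\sum_j \E a_{ij}^2)^{1/2}$ and $c := \max_j (\sum_i \E a_{ij}^2)^{1/2}$; each extra (non-tree) edge contributes a factor $\E a_{ij}^2 \le \max(r^2,c^2)$, and the root ranges freely over $\min(N,n)$ values. Thus the index-sum for a fixed walk type with $e$ distinct edges is at most $\min(N,n) \cdot \max(r,c)^{2e}$. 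Combining this with a F\"uredi--Koml\'os-type bound of $C^p$ on the number of walk types of length $2p$ in which each edge appears at least twice, one gets $\E \, \mathrm{tr}((UU^*)^p) \le \min(N,n) \cdot C^p (f + r + c)^{2p}$; choosing $p \asymp \log \min(N,n)$ so that $\min(N,n)^{1/2p} = O(1)$ and extracting the $2p$-th root yields $\E \smax(U) \lesssim f + r + c$, and adding $\E \smax(W)$ finishes the proof.

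The main obstacle is the bookkeeping in the last step: the count of walk types must be $C^p$ with $C$ an absolute constant that does not deteriorate with the number of distinct edges (equivalently with the ``deficiency'' $p - e$), because any surviving factor such as $(\log \min(N,n))^{O(1)}$ would multiply $f$ after taking the $2p$-th root and spoil the clean bound — in other words, one must show the repeated-edge (heavy-tail) contributions are genuinely of order $f$, not $f$ times a logarithm. This demands a careful F\"uredi--Koml\'os-style encoding of the walks, and tuning the truncation level against the combinatorics so that exactly the three stated terms emerge with no extraneous factors is where the real effort goes. Finally, all three terms are forced: $\smax(A) \ge \max_j \|A e_j\|_2$ and $\smax(A) \ge \max_i \|A^* e_i\|_2$ account for the first two, and $\smax(A) \ge \max_{i,j} |a_{ij}|$ together with a lower bound on $\E \max_{i,j} a_{ij}^4$ accounts for the third.
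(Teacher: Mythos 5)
The survey you are reading does not actually prove Theorem~\ref{Latala}; it states it as a cited result from Latala's paper, so there is no in-text proof to compare against. I am therefore judging your sketch on its own terms.

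The preliminary reductions are sound: symmetrizing so that truncation preserves mean zero, truncating at level $M\asymp f$, disposing of the tail matrix $W$ via $\E\smax(W)\le\E\|W\|_{\mathrm{HS}}\le (\sum_{i,j}\E w_{ij}^2)^{1/2}\le f^2/M$, and the entrywise moment bound $\E|u_{ij}|^m\le (Cf)^{m-2}\E a_{ij}^2$ all check out, and they set up a genuine moment-method attack. But there is a real gap at exactly the point you flag, and it is the heart of the matter rather than bookkeeping. What you need is that the number of closed-walk types of length $2p$ in which every traversed edge appears at least twice is at most $C^p$ \emph{uniformly in the number $e$ of distinct edges}, with $C$ an absolute constant. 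For $e=p$ (each edge exactly twice) this is the Dyck-path/Catalan count and is classical. But the walks whose weight is governed by the $f$-term are precisely those with $e\ll p$, i.e.\ with many high-multiplicity edges; there the walker repeatedly returns to already-visited vertices, a ``return'' step can have several admissible destinations, and the naive encoding over-counts by factors that grow with the degrees in the walk-graph. Taming this is the content of Vu's refinement of F\"uredi--Koml\'os, but that analysis is calibrated to i.i.d.\ entries with a fixed sup-norm bound, and its transfer to the inhomogeneous-variance, truncated-at-level-$f$ setting is not an off-the-shelf lemma. Until the $C^p$ walk count is actually established in this generality, the method as written yields the three-term bound only with a parasitic power of $\log\min(N,n)$ multiplying $f$, which is strictly weaker than the theorem. (For the record, I believe Latala's own argument is also a moment computation, but organized around a different combinatorial scheme; I cannot vouch that your encoding reproduces it.)

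One further correction: the closing remark that ``all three terms are forced'' via $\smax(A)\ge\max_{i,j}|a_{ij}|$ is not correct as stated. For a diagonal $n\times n$ matrix with independent $\pm1$ entries one has $\smax(A)=1$ while $f=n^{1/2}$, so $f$ is certainly not a lower bound for $\E\smax(A)$ instance by instance; Latala's estimate is simply far from tight for such matrices. The true optimality statement is weaker: no universal bound of this form can omit any one of $r$, $c$, $f$, but exhibiting that requires a different test matrix for each term. This does not affect the upper-bound sketch, but it should not be offered as evidence that the combinatorics is bound to work out without losses.
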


For random Gaussian matrices, a much sharper result than in Proposition~\ref{smax rough}
is due to Gordon \cite{Gordon 84, Gordon 85, Gordon 92}:

\begin{theorem}[Exteme singular values of Gaussian matrices, see \cite{DS}] \label{extreme Gaussian}
Let $A$ be an $N \times n$ matrix whose entries
are independent standard normal random variables. Then
$$
\sqrt{N} - \sqrt{n} \le \E \smin(A) \le \E \smax(A) \le \sqrt{N} + \sqrt{n}.
$$
\end{theorem}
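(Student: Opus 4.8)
The plan is to deduce both bounds from Gaussian comparison inequalities for the centered Gaussian process $(x,y)\mapsto\<Ax,y\>$ indexed by $S^{n-1}\times S^{N-1}$, using the identities
\[
\smax(A)=\sup_{x\in S^{n-1},\,y\in S^{N-1}}\<Ax,y\>,\qquad
\smin(A)=\inf_{x\in S^{n-1}}\ \sup_{y\in S^{N-1}}\<Ax,y\>.
\]
As comparison process I would take $Y_{x,y}=\<g,y\>+\<h,x\>$, where $g\in\R^N$ and $h\in\R^n$ are independent standard Gaussian vectors; note that $\sup_{x,y}Y_{x,y}=\|g\|_2+\|h\|_2$ while $\inf_x\sup_y Y_{x,y}=\|g\|_2-\|h\|_2$.

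For the bound on $\smax(A)$ I would use Slepian's inequality in its Sudakov--Fernique form, which requires only a comparison of expected squared increments. A short computation gives $\E|\<Ax,y\>-\<Ax',y'\>|^2=2-2\<x,x'\>\<y,y'\>$ and $\E|Y_{x,y}-Y_{x',y'}|^2=4-2\<x,x'\>-2\<y,y'\>$, and the difference of the latter and the former equals $2(1-\<x,x'\>)(1-\<y,y'\>)\ge0$ since the vectors have unit norm. Hence $\E\smax(A)\le\E\sup_{x,y}Y_{x,y}=\E\|g\|_2+\E\|h\|_2$.

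For the bound on $\smin(A)$ this crude comparison no longer suffices: the increment condition in the direction transverse to $x$ would now demand $(1-\<x,x'\>)(1-\<y,y'\>)\le0$, which is false, so one must invoke Gordon's inequality for Gaussian min--max---the genuine strengthening of Slepian's inequality, proved by a Gaussian interpolation argument (see \cite{Gordon 85}, with an exposition in \cite{DS}). Applied to the two processes above it gives $\E\smin(A)=\E\inf_x\sup_y\<Ax,y\>\ge\E\inf_x\sup_y Y_{x,y}=\E\|g\|_2-\E\|h\|_2$. I expect this step---locating and correctly applying Gordon's theorem, in particular using the version suited to the $1$-homogeneous auxiliary process $\|x\|_2\<g,y\>+\|y\|_2\<h,x\>$ (which agrees with $Y_{x,y}$ on the spheres), since the variances of $\<Ax,y\>$ and $Y_{x,y}$ do not match---to be the main obstacle.

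It then remains to estimate the Gaussian quantities. By Jensen, $\E\|g\|_2\le(\E\|g\|_2^2)^{1/2}=\sqrt N$ and $\E\|h\|_2\le\sqrt n$, which together with the trivial inequality $\smin(A)\le\smax(A)$ yields $\E\smin(A)\le\E\smax(A)\le\sqrt N+\sqrt n$. For the left-hand inequality (nontrivial only when $N\ge n$) one instead needs $\E\|g\|_2-\E\|h\|_2\ge\sqrt N-\sqrt n$, i.e.\ that the gap between $\sqrt m$ and the expected Euclidean norm of a standard Gaussian vector in $\R^m$ is non-increasing in $m$; this is an elementary fact about the moments of the chi distribution, readable from the explicit $\Gamma$-function formula for that expectation. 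Combining, $\sqrt N-\sqrt n\le\E\smin(A)$, which completes the argument.
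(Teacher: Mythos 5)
The paper itself gives no proof of Theorem~\ref{extreme Gaussian}; it simply attributes it to the Slepian--Gordon comparison inequalities and points to \cite{DS} and \cite[Section 3.3]{LT}. Your sketch is a faithful reconstruction of exactly that argument: the min--max representation of the singular values, the auxiliary process $Y_{x,y}=\<g,y\>+\<h,x\>$, Sudakov--Fernique for $\smax$, and Gordon for $\smin$. The increment computation $\E|Y_{x,y}-Y_{x',y'}|^2-\E|\<Ax,y\>-\<Ax',y'\>|^2=2(1-\<x,x'\>)(1-\<y,y'\>)\ge0$ is the heart of the matter and you have it right. So the approach matches the source, and most of the pieces are in place.

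Two points deserve sharpening. First, your stated worry about the sign is slightly off target. You are right that a one-sided Slepian/Sudakov--Fernique comparison cannot produce the lower bound, and that Gordon's min--max inequality is the correct tool; but once you invoke Gordon, the cross-index condition it imposes is precisely $(1-\<x,x'\>)(1-\<y,y'\>)\ge0$ (not $\le0$), and this is automatically satisfied. The genuine wrinkle is the variance mismatch you also flag: $\E\<Ax,y\>^2=1$ while $\E Y_{x,y}^2=2$, so the hypothesis $\E X^2=\E Y^2$ of Gordon's theorem fails. The standard remedy, spelled out in \cite{DS}, is to replace $\<Ax,y\>$ by $\tilde X_{x,y}=\<Ax,y\>+z$ with $z$ an independent standard normal scalar. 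Adding the same $z$ to every term leaves $\E\min_x\max_y$ unchanged, equalizes the variances at $2$, and one then checks that Gordon's covariance conditions reduce exactly to $(1-\<x,x'\>)(1-\<y,y'\>)\ge0$. With this fix the ``main obstacle'' you anticipate evaporates.

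Second, the closing step $\E\|g\|_2-\E\|h\|_2\ge\sqrt N-\sqrt n$, equivalently the monotonicity of $m\mapsto\sqrt m-\E\|G_m\|_2$, is true but is not as immediate as ``readable from the $\Gamma$-function formula'' suggests. The clean route, used in \cite{DS}, is the identity $\E\|G_m\|_2\cdot\E\|G_{m+1}\|_2=m$ (which follows from $\Gamma(z+1)=z\Gamma(z)$) together with a slightly sharpened upper bound on $\E\|G_m\|_2$; the raw Jensen bound $\E\|G_m\|_2\le\sqrt m$ is not enough by itself. This is a real, if small, lemma that your sketch should acknowledge rather than assert.

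Overall: correct approach, same as the one the paper cites; the two items above need to be filled in for the argument to be complete.
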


This result is a consequence of the sharp comparison inequalities for Gaussian processes
due to Slepian and Gordon, see \cite{Gordon 84, Gordon 85, Gordon 92} and \cite [Section 3.3]{LT}.

\paragraph{Tracy-Widom fluctuations}
One can deduce from Theorem~\ref{extreme Gaussian} a deviation
inequality for the extreme singular values. It follows formally by
using the concentration of measure in the Gauss space. Since the
$\smin(A)$, $\smax(A)$ are $1$-Lipschitz functions of $A$ considered
as a vector in $\R^{Nn}$, we have
\begin{equation}        \label{subgaussian concentration}
\P \big(  \sqrt{N} - \sqrt{n} - t \le \smin(A) \le \smax(A) \le
\sqrt{N} + \sqrt{n} + t \big) \ge 1 - 2 e^{-t^2/2}, \quad t \ge 0,
\end{equation}
see \cite{DS}. For general random matrices with independent bounded
entries, one can use Talagrand's concentration inequality for convex
Lipschitz functions on the cube \cite{Tal1, Tal2}. Namely, suppose
the entries of $A$ are independent, have mean zero, and are
uniformly bounded by $1$. Since $\smax(A)$ is a convex function of
$A$, Talagrand's concentration inequality implies
\[
\P \big( |\smax(A) - \Median(\smax(A)) | \ge t \big)
\le 2 e^{-t^2/2}.
\]
Although the precise value of the median is unknown,
integration of the previous inequality shows
that $|\E \smax(A)-\Median(\smax(A))| \le C$.
The same deviation inequality holds for symmetric random matrices.

Inequality \eqref{subgaussian concentration} is optimal for large
$t$ because $\smax(A)$ is bounded below by the magnitude of every
entry of $A$ which has the Gaussian tail. But for small deviations,
say for $t<1$, inequality \eqref{subgaussian concentration} is
meaningless. Tracy-Widom law predicts a different tail behavior for
small deviations $t$. It must follow the  tail decay of the
Tracy--Widom function
$F_1$, which is not
subgaussian \cite{Aubrun}, \cite{Johnstone}:
\[
c \exp(-C \tau^{3/2}) \le 1-F_1(\tau) \le C \exp(-C' \tau^{3/2})
 \quad \tau \ge 0.
\]
The concentration of this type  for
Hermitian complex and real Gaussian matrices (Gaussian Unitary
Ensemble and Gaussian Orthogonal Ensemble) was proved by Ledoux
\cite{Ledoux-Tracy-Widom} and Aubrun \cite{Aubrun}. Recently,
Feldheim and Sodin \cite{Feldheim-Sodin} introduced a general
approach, which allows to prove  the asymptotic Tracy--Widom law and
its non-asymptotic counterpart at the same time. Moreover, their
method is applicable to random matrices with independent subgaussian
entries both in symmetric and non-symmetric case. In particular, for
an $N \times n$ random matrix $A$ with independent subgaussian
entries they proved that
\begin{equation}              \label{Frldheim-Sodin-smax}
  p(\tau):= \P \big( \smax(A) \ge \sqrt{N}+\sqrt{n}+\tau \sqrt{N} \big)
  \le C \exp ( -cn \tau^{3/2} ) \quad \tau \ge 0.
\end{equation}
Bounds \eqref{subgaussian concentration} and
\eqref{Frldheim-Sodin-smax} show that the tail behavior of the
maximal singular value is essentially different for small and large
deviations: $p(\tau)$ decays like $\exp( -cn \tau^{3/2})$ for $\tau \le
c(n/N)^2$ and like $\exp( - c_1 N \tau^2)$ for larger $\tau$.
For square matrices the meaning of this phenomenon is
especially clear.
Large deviations of $\smax(A)$ are produced
by bursts of single entries: both $\P(\smax(A) \ge \E \smax(A)+t)$
and $\P(|a_{1,1}| \ge \E \smax(A)+t)$ are of the same order $\exp(-c t^2)$
for $t \ge \E \smax(A)$. In contrast, for small deviations (for smaller $t$)
the situation becomes truly multidimensional, and
Tracy-Widom type asymptotics appears.

The method of \cite{Feldheim-Sodin} also addresses the more difficult
smallest singular value. For an $N \times n$ random matrix $A$ whose dimensions are not
too close to each other Feldheim and Sodin \cite{Feldheim-Sodin}
proved the Tracy--Widom law for the smallest singular value together
with a non-asymptotic version of the bound $\smin(A) \sim \sqrt{N} - \sqrt{n}$:
\begin{equation}    \label{Feldheim-Sodin-smin}
  \P \Big( \smin(A) \le \sqrt{N}-\sqrt{n}- \tau \sqrt{N} \cdot
  \frac{N}{N-n} \Big)
  \le \frac{C}{1-\sqrt{n/N}} \exp( -c' n \tau^{3/2} ).
\end{equation}

\section{The smallest singular value}               \label{s: smallest}

\paragraph{Qualitative invertibility problem}
In this section we focus on the behavior of the smallest singular
value of random $N \times n$ matrices with independent entries. The
smallest singular value -- the {\em hard edge} of the spectrum -- is
generally more difficult and less amenable to analysis by classical
methods of random matrix theory than the largest singular value, the
``soft edge''. The difficulty especially manifests itself for square
matrices ($N=n$) or almost square matrices ($N - n = o(n)$). For
example, we were guided so far by the asymptotic prediction
$\smin(A) \sim \sqrt{N} - \sqrt{n}$, which obviously becomes useless
for square matrices.

A remarkable example is provided by $n \times n$ {\em random
Bernoulli matrices} $A$, whose entries are independent $\pm 1$
valued symmetric random variables. Even the {\em qualitative
invertibility problem}, which asks to estimate the probability that
$A$ is invertible,  is nontrivial in this situation. Koml\'os
\cite{Komlos 67, Komlos 68} showed that $A$ is invertible
asymptotically almost surely: $p_n := \P( \smin(A) = 0) \to 0$ as $n
\to \infty$. Later Kahn, Komlos and Szemeredi \cite{KKS} proved that
the singularity probability satisfies $p_n \le c^n$ for some $c \in
(0,1)$. The base $c$ was gradually improved in \cite{TV 06, TV 07},
with the  latest record of $p_n = (1/\sqrt{2} + o(1))^n$ obtained in
\cite{BWV}. It is conjectured that the dominant source of
singularity of $A$ is the presence of two rows or two columns that
are equal up to a sign, which would imply the best possible bound
$p_n = (1/2 + o(1))^n$.

\paragraph{Quantitative invertibility problem}
The previous problem is only concerned with whether
the hard edge $\smin(A)$ is zero or not. This says nothing
about the {\em quantitative invertibility problem} of the typical size of $\smin(A)$.
The latter question has a long history.
Von~Neumann and his associates used random matrices as test inputs
in algorithms for numerical solution of systems of linear equations.
The accuracy of the matrix algorithms, and sometimes their running time as well, depends
on the condition number  $\kappa(A) = \smax(A)/\smin(A)$.
Based on heuristic and experimental evidence, von~Neumann and Goldstine
predicted that
\begin{equation}                \label{vN}
\smin(A) \sim n^{-1/2}, \quad \smax(A) \sim n^{1/2} \quad \text{with high probability}
\end{equation}
which together yield $\kappa(A) \sim n$, see \cite[Section 7.8]{vN-G}.
In Section~\ref{s: extreme} we saw several results establishing
the second part of \eqref{vN}, for the largest singular value.

Estimating the smallest singular value turned out to be more difficult.
A more precise form of the prediction $\smin(A) \sim n^{-1/2}$
was repeated by Smale \cite{Smale} and proved by Edelman \cite{Edelman}
and Szarek \cite{Sz} for {\em random Gaussian matrices}
$A$, those with i.i.d. standard normal entries. For such matrices,
the explicit formula for the joint density of the eigenvalues $\lambda_i$ of
$\frac{1}{n}A^*A$ is available:
$$
\pdf(\lambda_1,\ldots,\lambda_n) = C_n \prod_{1 \le i < j \le n}
|\lambda_i - \lambda_j|
\prod_{i=1}^n \lambda_i^{-1/2} \exp \big( - \sum_{i=1}^n \lambda_i/2 \big).
$$
Integrating out all the eigenvalues except the smallest one,
one can in principle compute its distribution.
This approach leads to the following asymptotic result:

\begin{theorem}[Smallest singular value of Gaussian matrices \cite{Edelman}]  \label{Edelman}
Let $A=A_n$ be an $n \times n$ random matrix whose entries
are independent standard normal random variables. Then for every fixed $\e \ge 0$
one has
$$
\P \big( \smin(A) \le \e n^{-1/2} \big) \to 1 - \exp(- \e - \e^2/2)
\quad \text{as } n \to \infty.
$$
\end{theorem}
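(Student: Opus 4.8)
\medskip
\noindent\textbf{Proof proposal.}
The plan is to reduce the statement to the distribution of the smallest eigenvalue of the Wishart matrix $W=A^*A$ and to evaluate that distribution at the hard edge. Since $\smin(A)\le\e n^{-1/2}$ is equivalent to $n\mu\le\e^2$, where $\mu$ denotes the smallest eigenvalue of $W$, it suffices to show that $\P(n\mu>\e^2)\to\exp(-\e-\e^2/2)$; equivalently, that the rescaled hard-edge eigenvalue $n\mu$ converges in distribution to the law on $(0,\infty)$ with distribution function $x\mapsto 1-\exp(-\sqrt{x}-x/2)$. Equivalently again, $\sqrt n\,\smin(A)$ converges to the density $(1+y)\exp(-y-y^2/2)$, whose antiderivative $-\exp(-y-y^2/2)$ is elementary --- this is why the limit comes out in closed form.

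First I would pass to the joint eigenvalue density of $W$, the Laguerre $\beta=1$ ensemble, whose density for the eigenvalues $\mu_1,\dots,\mu_n$ is proportional to $\prod_{i<j}|\mu_i-\mu_j|\prod_i\mu_i^{-1/2}e^{-\mu_i/2}$ (cf.\ the formula quoted above). With $s=\e^2/n$,
\[
\P(n\mu>\e^2)=\frac{1}{Z_n}\int_{(s,\infty)^n}\prod_{i<j}|\mu_i-\mu_j|\;\prod_i\mu_i^{-1/2}e^{-\mu_i/2}\;d\mu.
\]
The substitution $\mu_i=s+\nu_i$ leaves the Vandermonde factor unchanged, turns $\prod_i e^{-\mu_i/2}$ into $e^{-ns/2}\prod_i e^{-\nu_i/2}=e^{-\e^2/2}\prod_i e^{-\nu_i/2}$ --- this already isolates the Gaussian-type factor $\exp(-\e^2/2)$ --- and turns $\prod_i\mu_i^{-1/2}$ into $\prod_i(s+\nu_i)^{-1/2}$. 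Dividing and multiplying the remaining integrand by $\prod_i\nu_i^{-1/2}$ presents the leftover integral as an expectation over a fresh $n\times n$ real Wishart ensemble:
\[
\P(n\mu>\e^2)=e^{-\e^2/2}\;\E\Big[\prod_{i=1}^n\Big(1+\tfrac{\e^2}{n\,\nu_i}\Big)^{-1/2}\Big].
\]
It remains to show that this multiplicative linear statistic tends to $e^{-\e}$.

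There are two ways to finish. One is to note that only the $\nu_i$ of order $1/n$ contribute to the product, so that $\{n\nu_i\}$ converges to the limiting hard-edge (Bessel-type) point process of the square real Laguerre ensemble, and the statistic converges to the corresponding multiplicative functional --- a Fredholm Pfaffian of the hard-edge kernel --- which for this ensemble evaluates to $\exp(-\e)$. The other, essentially Edelman's route, is to integrate $\nu_2,\dots,\nu_n$ out of the joint density directly (again via a shift plus a Selberg/Laguerre-type integral evaluation), obtaining a confluent-hypergeometric expression for the finite-$n$ density of $n\mu$ that admits a tractable closed form in the square case because of the special weight $\nu_i^{-1/2}$; letting $n\to\infty$ and checking termwise convergence yields the density $\frac{1+\sqrt{x}}{2\sqrt{x}}\exp(-\sqrt{x}-x/2)$, and integrating it from $0$ to $\e^2$ gives $1-\exp(-\e-\e^2/2)$. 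Either way, combining with the factor $e^{-\e^2/2}$ finishes the proof.

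The main obstacle is exactly this last evaluation. Unlike the complex Gaussian case, the real case $\beta=1$ carries no simple orthogonal-polynomial kernel, so both the integration over $\nu_2,\dots,\nu_n$ and the identification of the hard-edge functional rely on skew-orthogonal polynomials and Pfaffians, and they are genuinely delicate for square matrices, where the weight $\nu_i^{-1/2}$ sits at the edge of integrability. A secondary difficulty is justifying the interchange of limit and expectation in the last display: since $\E\,\mathrm{tr}(W^{-1})=\infty$ for square Wishart matrices, the multiplicative statistic cannot be tamed by crude moment or domination bounds, and one must invoke the exact density (or sharp hard-edge tail estimates) to pass to the limit.
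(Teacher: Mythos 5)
The paper does not actually prove Edelman's theorem; it only quotes the joint eigenvalue density of the Laguerre $\beta=1$ ensemble and then says ``integrating out all the eigenvalues except the smallest one, one can in principle compute its distribution,'' deferring the computation to \cite{Edelman}. So the relevant question is whether your plan correctly fills in that sketch.

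The algebraic reduction you carry out is correct and consistent with Edelman's route. The equivalence $\smin(A) \le \e n^{-1/2} \iff n\mu \le \e^2$ is right, as is the shift $\mu_i = \e^2/n + \nu_i$: the Vandermonde is translation invariant, the exponential drops an exact factor $e^{-\e^2/2}$, and after multiplying and dividing by $\prod_i \nu_i^{-1/2}$ the normalizing constant is indeed the same $Z_n$, so the remaining integral is an honest expectation over a fresh $n\times n$ real Wishart spectrum:
\[
\P(n\mu>\e^2)=e^{-\e^2/2}\;\E\prod_{i=1}^n\Big(1+\tfrac{\e^2}{n\,\nu_i}\Big)^{-1/2}.
\]
This is a clean way to see where the $e^{-\e^2/2}$ comes from, and it matches Edelman's own change of variables about the smallest eigenvalue.

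Where the proposal falls short is exactly where you say it does: you have not shown that the multiplicative statistic tends to $e^{-\e}$, and this is not a loose end but the entire analytic content of Edelman's paper. Route (a) asserts, without derivation, that a particular Fredholm Pfaffian of the $\beta=1$, $a=-1/2$ hard-edge kernel equals $e^{-\e}$; Route (b) gestures at ``a Selberg/Laguerre-type integral evaluation'' and ``a confluent-hypergeometric expression'' without producing either. Two further technical points deserve more than a mention. First, the convergence of $\E\prod_i(1+\e^2/(n\nu_i))^{-1/2}$ to a hard-edge functional is not immediate: the product has contributions from the entire spectrum, $\E\,\mathrm{tr}(W^{-1})=\infty$ in the square case, and although the product is bounded by $1$ (so Fatou gives a one-sided bound), identifying the limit requires showing that only the $O(1/n)$ eigenvalues matter, with control that your sketch does not supply. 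Second, the weight $\nu_i^{-1/2}$ sits exactly at the boundary of integrability, which is why the $\beta=1$ square case is delicate and does not follow from the generic rectangular Laguerre analysis. As a plan, the proposal is sound and in the spirit of what the paper cites; as a proof, the central limit evaluation is missing.
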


The limiting probability behaves as $1 - \exp(- \e - \e^2/2) \sim
\e$ for small $\e$. In fact, the following
non-asymptotic bound holds for all $n$:
\begin{equation}                                        \label{tail}
\P \big( \smin(A) \le \e n^{-1/2} \big) \le \e,
\quad \e \ge 0.
\end{equation}
This follows from the analysis of Edelman \cite{Edelman};
Sankar, Spielman and Teng \cite{SST} provided a different geometric proof of
estimate \eqref{tail}
up to an absolute constant factor and extended it
to non-centered Gaussian distributions.

\paragraph{Smallest singular values of general random matrices}
These methods do not work for general random matrices, especially
those with discrete distributions, where rotation invariance
and the joint density of eigenvalues are not available. The prediction that
$\smin(A) \sim n^{-1/2}$ has been open even for random Bernoulli matrices.
Spielman and Teng conjectured in their ICM 2002 talk \cite{ST} that
estimate \eqref{tail} should hold for the random Bernoulli matrices up to an exponentially
small term that accounts for their singularity probability:
$$
\P \big( \smin(A) \le \e n^{-1/2} \big) \le \e + c^n, \quad \e \ge 0
$$
where $c \in (0,1)$ is an absolute constant.
The first polynomial bound on $\smin(A)$ for
general random matrices was obtained in \cite{R square}. Later
Spielman-Teng's conjecture was proved in \cite{RV square} up to a constant factor, and
for general random matrices:

\begin{theorem}[Smallest singular value of square random matrices \cite{RV square}]  \label{square}
Let $A$ be an $n \times n$ random matrix whose entries are independent
and identically distributed subgaussian random variables
with zero mean and unit variance.
Then
$$
\P \big( \smin(A) \le \e n^{-1/2} \big) \le C\e +  c^n,
\quad \e \ge 0
$$
where $C > 0$  and $c \in (0,1)$ depend only on the subgaussian
moment of the entries.
\end{theorem}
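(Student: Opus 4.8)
The plan is to reduce the problem to a small-ball probability estimate for a single random vector (a row or column of $A$), and to handle separately two regimes depending on the geometry of the minimizing vector $x$ in $\smin(A) = \min_{x \in S^{n-1}} \|Ax\|_2$. First I would write the event $\{\smin(A) \le \e n^{-1/2}\}$ as the event that there exists a unit vector $x$ with $\|Ax\|_2 \le \e n^{-1/2}$, and partition $S^{n-1}$ into \emph{compressible} vectors (those within distance, say, $c_0$ of a sparse vector supported on $\delta n$ coordinates) and \emph{incompressible} vectors (the rest). For compressible $x$ the argument is a direct $\e$-net estimate in the spirit of Proposition~\ref{smax rough}: the set of compressible vectors has a small net, each $\|Ax\|_2$ is unlikely to be small because $Ax$ is a vector of $n$ independent subgaussian coordinates with typical size $\gtrsim 1$, and combining with the bound $\smax(A) \lesssim \sqrt n$ (to control the approximation error on the net) gives a bound of the form $c^n$ after choosing the parameters $c_0, \delta$ appropriately. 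This regime is routine.

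The incompressible regime is the heart of the matter. The key reduction is the observation that for an incompressible unit vector $x$, a constant fraction of its coordinates satisfy $c_1 n^{-1/2} \le |x_k| \le c_2 n^{-1/2}$, which lets one replace the distance $\|Ax\|_2$ by the distance from a \emph{fixed} row, say the first, to the span of the others. Precisely, using a standard averaging argument one shows
\[
\P\big(\smin(A) \le \e n^{-1/2},\ x \text{ incompressible}\big)
\;\le\; \frac{C}{n} \sum_{k=1}^n \P\big( \dist(R_k, H_k) \le C_1 \e \big),
\]
where $R_k$ is the $k$-th row of $A$ and $H_k$ is the span of the remaining rows. So it suffices to show, for each $k$, that $\dist(R_k, H_k)$ has a bounded density near $0$, i.e. $\P(\dist(R_k, H_k) \le t) \le C t + c^n$ for $t \ge 0$. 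Writing $\dist(R_k, H_k) = |\langle R_k, \nu \rangle|$ for a unit normal $\nu = \nu(H_k)$ of the hyperplane $H_k$, and noting $\nu$ is independent of $R_k$, this becomes a small-ball estimate: for a fixed (random) unit vector $\nu$ and $R_k$ a vector of i.i.d. mean-zero unit-variance subgaussian entries,
\[
\P\big( |\langle R_k, \nu \rangle| \le t \big) \le C t + c^n.
\]

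The main obstacle is exactly this last inequality, and it is here that the connection to the Littlewood--Offord problem enters (Section~\ref{s: Littlewood-Offord}). If $\nu$ had many coordinates of comparable size and were ``arithmetically unstructured'', a Littlewood--Offord / Esseen-type inequality would give $\P(|\langle R_k,\nu\rangle| \le t) \lesssim t + n^{-1/2}$, which is more than enough. The difficulty is that $\nu$ is itself a complicated random vector and could \emph{a priori} be structured (e.g. close to a sparse or arithmetically rigid vector), in which case small-ball probabilities are large. The resolution is a dichotomy on $\nu$: first establish that $\nu$ is incompressible with probability $1 - c^n$ (so not close to sparse), and then introduce the \emph{essential least common denominator} $\mathrm{lcd}(\nu)$ as a quantitative measure of arithmetic structure, show via a covering argument over the low-$\mathrm{lcd}$ vectors that $\P(\mathrm{lcd}(\nu) \text{ is small}) \le c^n$, and finally invoke a sharp small-ball estimate (an inverse Littlewood--Offord theorem) stating that when $\mathrm{lcd}(\nu)$ is large the desired bound $\P(|\langle R_k,\nu\rangle| \le t) \le Ct + c^n$ holds. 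Assembling the compressible bound, the averaging reduction, and this structural analysis of the normal vector yields the theorem. I expect the control of the least common denominator of the random normal $\nu$ — proving that arithmetic structure is exponentially unlikely — to be the most delicate step.
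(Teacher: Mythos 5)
Your proposal matches the paper's proof strategy essentially step for step: the decomposition of the sphere into compressible (sparse) and incompressible (spread) vectors, the $\e$-net bound on the compressible part giving an exponentially small probability, the reduction of the incompressible part to the distance from a random row/column to the span of the others (the paper states it for a single column of a spread vector, while you use the averaging version over all $k$, which is the form needed once one relaxes from fully spread to merely incompressible — and is in fact what the cited paper \cite{RV square} does), and the Littlewood--Offord analysis of the random normal $\nu$ via its essential least common denominator. Your use of rows rather than columns is immaterial by transposition, and you have correctly identified the control of $\lcd(\nu)$ as the key technical step, which is Theorem~\ref{unstructured} in the survey.
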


This result addresses both qualitative and quantitative aspects of the invertibility problem.
Setting $\e=0$ we see that $A$ is invertible with probability at least $1-c^n$.
This generaizes the result of Kahn, Komlos and Szemeredi \cite{KKS} from
Bernoulli to all subgaussian matrices.
On the other hand, quantitatively, Theorem~\ref{square}
states that $\smin(A) \gtrsim n^{-1/2}$ with high probability for general random matrices.
A corresponding non-asymptotic upper bound $\smin(A) \lesssim n^{-1/2}$ also holds \cite{RV CRAS},
so we have $\smin(A) \sim n^{-1/2}$ as in von~Neumann-Goldstine's prediction.
Both these bounds, upper and lower, hold with high probability under
the weaker fourth moment assumption on the entries \cite{RV square, RV CRAS}.

This theory was extended to {\em rectangular random matrices}
of arbitrary dimensions $N \times n$ in \cite{RV rectangular}. As we know from
Section~\ref{s: extreme}, one expects that  $\smin(A) \sim \sqrt{N} - \sqrt{n}$.
But this would be incorrect for square matrices.
To reconcile rectangular and square matrices we
make the following correction of our prediction:
\begin{equation}                                        \label{smin rectangular}
\smin(A) \sim \sqrt{N} - \sqrt{n-1}
\quad \text{with high probability}.
\end{equation}
For square matrices one would have the correct estimate
$\smin(A) \sim \sqrt{n} - \sqrt{n-1} \sim n^{-1/2}$. The following result
extends Theorem~\ref{square} to rectangular matrices:

\begin{theorem}[Smallest singular value of rectangular random matrices \cite{RV square}]
\label{rectangular}
Let $A$ be an $n \times n$ random matrix whose entries are independent
and identically distributed subgaussian random variables
with zero mean and unit variance.
Then
$$
\P \big( \smin(A) \le \e (\sqrt{N} - \sqrt{n-1}) \big) \le (C\e)^{N-n+1} +  c^N,
\quad \e \ge 0
$$
where $C > 0$  and $c \in (0,1)$ depend only on the subgaussian
moment of the entries.
\end{theorem}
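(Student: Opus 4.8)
The plan is to bound $\smin(A) = \inf_{x \in S^{n-1}} \|Ax\|_2$ by splitting the sphere into ``structured'' and ``spread'' unit vectors and treating them by an $\e$-net argument and a geometric small-ball argument respectively. Two routine reductions come first: we may assume $\e$ is smaller than a suitable absolute constant, since otherwise $(C\e)^{N-n+1} \ge 1$ once $C$ is large; and, using Proposition~\ref{smax rough}, we may work on the event $\{\|A\| \le C_0\sqrt N\}$, whose complement has probability at most $e^{-cN}$. Following the scheme behind Theorem~\ref{square}, call $x \in S^{n-1}$ \emph{compressible} if it lies within Euclidean distance $\rho$ of some $\d n$-sparse vector (for small absolute constants $\rho, \d$), and \emph{incompressible} otherwise; we estimate $\inf\|Ax\|_2$ over the two classes separately.

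For compressible $x$ only a lower bound of order $\sqrt N$ is required, which the statement allows because $\e(\sqrt N - \sqrt{n-1}) \le \e\sqrt N$. For a \emph{fixed} $x \in S^{n-1}$ the entries $\<A_i, x\>$ of $Ax$ are i.i.d.\ mean-zero subgaussian with unit variance, so a Laplace-transform (tensorization) estimate gives $\P(\|Ax\|_2 \le c_1\sqrt N) \le e^{-c_2 N}$. The compressible set has an $\rho$-net of cardinality $e^{C\d\log(1/\d)\,n}$; choosing $\d$ small enough (recall $N \ge n$) this is $\le e^{c_2 N/2}$, so a union bound, together with $\|A\| \le C_0\sqrt N$ to pass from the net to the whole set, shows $\inf_{x \text{ compressible}}\|Ax\|_2 \ge c_3\sqrt N$ off an event of probability $e^{-cN}$.

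The incompressible vectors carry the content. The structural fact one uses is that an incompressible $x$ has a subset $\s = \s(x) \subseteq \{1,\dots,n\}$ with $|\s| \ge c_4 n$ on which $|x_j| \asymp n^{-1/2}$, and with $\sum_{j\in\s}x_j^2 \ge c_5$. Writing $A_j$ for the columns of $A$,
$$
\|Ax\|_2 \ge \dist\!\big(\textstyle\sum_{j\in\s}x_jA_j,\ \Span(A_j : j\notin\s)\big) = \|Q A_\s x_\s\|_2 \ge c_5^{1/2}\,\smin(QA_\s),
$$
where $Q$ is the orthogonal projection onto $\Span(A_j : j\notin\s)^\perp$, a subspace of dimension $M \ge N - n + |\s|$ that is independent of the columns $(A_j)_{j\in\s}$. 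In an orthonormal basis of its range, $QA_\s$ is an $M \times |\s|$ matrix with independent isotropic subgaussian columns; its ``defect'' $M - |\s| + 1$ is still $\ge N - n + 1$, and one checks $\sqrt M - \sqrt{|\s|-1} \ge \sqrt N - \sqrt{n-1}$. When $N - n$ is at least a small fraction of $n$, the number of candidate sets $\s$ is $2^n$, which is affordable since the target probability is exponentially small in $N - n + 1$, and a union bound reduces everything to a bound of the same shape, $\P(\smin(B) \le C\e(\sqrt M - \sqrt{m-1})) \le (C\e)^{M-m+1} + c^M$, for an $M \times m$ matrix $B$ with independent isotropic subgaussian columns; when $N - n$ is small (in particular for square matrices) one instead removes a single coordinate and averages over the $\ge c_4 n$ spread columns, reducing to $\P(\dist(A_1,\Span(A_2,\dots,A_n)) \le C\e t)$ exactly as in Theorem~\ref{square}.

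The main obstacle is this last small-ball estimate, which is again an invertibility bound and is handled by induction on the number of columns combined with a \emph{Littlewood--Offord}-type inequality of the kind developed in the next section: for a unit vector $\theta$ the concentration function of $\<A_1,\theta\>$ at scale $t$ is at most $C\big(t + 1/\lcd(\theta)\big)$. The delicate point is to show that the relevant random normal directions --- the unit normal to $\Span(A_2,\dots,A_n)$, or a random direction in $(\mathrm{range}\,Q)^\perp$ --- are themselves incompressible and have exponentially large least common denominator, except on an event of probability $c^N$. This is a self-referential statement (the normal to a random hyperplane is ``generic''), and it is resolved by feeding the compressible estimate of the second step into a covering argument over directions of small $\lcd$. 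Granting it, one substitutes the resulting small-ball bound into the tensorization produced by the reduction, combines the compressible and incompressible estimates, and optimizes the absolute constants to conclude.
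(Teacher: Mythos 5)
Your overall plan --- a compressible/incompressible split of the sphere, an $\e$-net with a tensorized small-ball estimate on the compressible part, and a reduction of the incompressible part to a Littlewood--Offord anti-concentration estimate plus a structure theorem for random normals --- is the same skeleton the survey sketches for the square case (Theorem~\ref{square}) and indicates for the rectangular one, and your observation that $\sqrt M - \sqrt{|\sigma|-1}\ge\sqrt N-\sqrt{n-1}$ under the projection is a correct and useful check. The two places where your sketch does not close, however, are exactly where the rectangular case has genuinely new content.

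First, in the ``$N-n$ comparable to $n$'' branch you project off $\Span(A_j:j\notin\sigma)$ and propose to recurse on the $M\times|\sigma|$ matrix $QA_\sigma$, which has independent isotropic subgaussian \emph{columns} but not i.i.d.\ \emph{entries}. The theorem you are trying to re-apply, and more importantly the random-normal structure theorem (Theorem~\ref{unstructured}) that powers the Littlewood--Offord step, are proved for i.i.d.-entry matrices; nothing in your argument re-derives them for the column-independent class, so the recursion is not well-founded as written. The paper's route avoids any such recursion. Second, in the ``$N-n$ small'' branch you invoke only the one-dimensional bound $\LL(\langle A_1,\theta\rangle,t)\lesssim t+1/\lcd(\theta)$, and you speak of ``the unit normal to $\Span(A_2,\dots,A_n)$''; but for a rectangular $A$ the orthogonal complement of $n-1$ columns in $\R^N$ has dimension $d=N-n+1>1$, and a one-dimensional small-ball input can produce at most a single power of $\e$, i.e.\ $C\e+c^N$ rather than $(C\e)^{N-n+1}+c^N$. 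What is actually needed --- and what the survey singles out as the key extension in the Remark closing Section~\ref{s: Littlewood-Offord} --- is the codimension-$d$ distance estimate $\P\big(\dist(X,H)\le\e\sqrt d\big)\le(C\e)^d+c^N$ for a random subspace $H$ of codimension $d$, applied with $H_k=\Span(A_j:j\ne k)$. Its proof requires a $d$-dimensional generalization of the essential least common denominator and of Theorem~\ref{sbp}; that generalization is the real crux of the rectangular case, and it is absent from your proposal.
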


This result has been known for a long time for {\em tall matrices},
whose the aspect ratio $\lambda = n/N$ is bounded by a sufficiently
small constant, see \cite{Bennett}. The optimal bound $\smin(A) \ge
c \sqrt{N}$ can be proved in this case using an $\e$-net argument
similar to Proposition~\ref{smax rough}. This was extended in
\cite{LPRT} to $\smin(A) \ge c_\l \sqrt{N}$ for all aspect ratios
$\lambda < 1 - c/\log n$. The dependence of $c_\l$ on the aspect
ratio $\l$ was improved in \cite{AFMS} for Bernoulli matrices and in
\cite{R 06} for general subgaussian matrices. Feldheim-Sodin's
Theorem~\ref{FS} gives precise Tracy-Widom fluctuations of
$\smin(A)$ for tall matrices, but becomes useless for almost square
matrices (say for $N < n+ n^{1/3}$). Theorem~\ref{rectangular} is an an optimal
result (up to absolute constants) which covers matrices with all
aspect ratios from tall to square. Non-asymptotic estimate
\eqref{smin rectangular} was extended to matrices whose entries have
finite $(4+\e)$-th moment in \cite{V product}.

\paragraph{Universality of the smallest singular values}
The limiting distribution of $\smin(A)$ turns out to be universal as dimension $n \to \infty$.
We already saw a similar universality phenomenon in Theorem~\ref{FS}
for genuinely rectangular matrices. For square matrices, the corresponding result
was proved by Tao and Vu \cite{TV 09}:

\begin{theorem}[Smallest singular value of square matrices: universality \cite{TV 09}]
Let $A$ be an $n \times n$ random matrix whose entries are independent
and identically distributed random variables with zero mean, unit variance,
and finite $K$-th moment where $K$ is a sufficiently large absolute constant.
Let $G$ be an $n \times n$ random matrix whose entries
are independent standard normal random variables.
Then
$$
\P (\sqrt{n} \smin(G) \le t - n^{-c}) - n^c
\le \P (\sqrt{n}\smin(A) \le t)
\le \P (\sqrt{n} \smin(G) \le t + n^{-c}) + n^c
$$
where $c>0$ depends only on the $K$-th moment of the entries.
\end{theorem}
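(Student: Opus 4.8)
The plan is to access $\smin(A)$ through the spectrum of a Hermitian matrix and then run a Lindeberg-type replacement, swapping the entries of $A$ one at a time for Gaussian entries and controlling each swap by matching only the low moments of the entries. Let $W(A)$ be the $2n\times 2n$ Hermitian matrix with zero diagonal blocks and off-diagonal blocks $A$ and $A^{\top}$; its $2n$ eigenvalues are $\pm s_1(A),\dots,\pm s_n(A)$, so $\smin(A)$ is exactly the eigenvalue of $W(A)$ closest to the origin, i.e.\ a bulk eigenvalue near the centre of a Wigner-type matrix, with typical absolute value of order $n^{-1/2}$. By Theorem~\ref{square} this eigenvalue stays above $n^{-B}$ for some constant $B$, and by Proposition~\ref{smax rough} the whole spectrum stays within $C\sqrt n$, outside an event of probability at most $n^{-c}$; this localization is what makes the resolvents of $W(A)$ tractable.

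First I would \emph{truncate and smooth}. Using the finite $K$-th moment hypothesis, replace each $a_{ij}$ by $a_{ij}\,\one_{|a_{ij}|\le n^{\d}}$, re-centered and re-normalized to preserve mean $0$ and variance $1$; this changes $A$ with probability at most $n^2\,\P(|a_{11}|>n^{\d})\le n^{2-\d K}$, which is $\le n^{-c}$ once $K$ is large, and moves $\smin$ by at most the operator norm of the difference, which with high probability is negligible (smaller than $n^{-1/2-c}$) when $K$ is large. Then replace the indicator $\one_{(-\infty,t]}$ by a smooth function $F$ with $\one_{(-\infty,t]}\le F\le \one_{(-\infty,t+\eta]}$ and $\|F^{(k)}\|_\infty \lesssim \eta^{-k}$ at scale $\eta=n^{-c}$. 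Since $\P(\sqrt n\,\smin(A)\le t)\le \E F(\sqrt n\,\smin(\tilde A))+n^{-c}$ and $\E F(\sqrt n\,\smin(G))\le \P(\sqrt n\,\smin(G)\le t+\eta)$, it suffices to prove the swapping bound $|\E F(\sqrt n\,\smin(\tilde A))-\E F(\sqrt n\,\smin(G))|\le n^{-c}$; the reversed choice of $F$ handles the other direction, and together these two steps produce precisely the $\pm n^{-c}$ shifts in $t$ and the additive error in the statement.

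For the swap, enumerate the $n^2$ (independent) entries and interpolate $\tilde A=A^{(0)},A^{(1)},\dots,A^{(n^2)}=G$, where $A^{(k)}$ and $A^{(k-1)}$ differ in a single entry $(i,j)$. Writing the common part as $B$ and the entry to be swapped as $\xi$, put $\varphi(\xi)=\E_B\,F\bigl(\sqrt n\,\smin(B+\xi e_i e_j^{\top})\bigr)$. Because $\E a_{ij}=\E g_{ij}=0$ and $\E a_{ij}^2=\E g_{ij}^2=1$, a third-order Taylor expansion of $\varphi$ about $\xi=0$ gives
$$
\bigl|\E\varphi(a_{ij})-\E\varphi(g_{ij})\bigr|\;\le\;\tfrac16\bigl(\E|a_{ij}|^3+\E|g_{ij}|^3\bigr)\sup_{|\xi|\le n^{\d}}|\varphi'''(\xi)|,
$$
so, summing over the $n^2$ positions, the total swapping error is at most $Cn^2\sup|\varphi'''|$; on the bad event one instead uses $\|F\|_\infty\le 1$ and its small probability. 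Everything thus reduces to a \emph{stability estimate} for the third derivative of $\varphi$ on the good event.

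This stability estimate is where the real difficulty lies, and the naive bound is far from enough: $\varphi'''$ carries three derivatives of $F$, hence a factor $\eta^{-3}=n^{3c}$, together with derivatives of $\smin(B+\xi e_i e_j^{\top})$, and since $\smin$ itself is of order $n^{-1/2}$ and enters through $\partial_\xi\smin=\tfrac{1}{2\smin}\<v,(\partial_\xi M)\,v\>$ for the least singular vector $v$ of $M=(B+\xi e_i e_j^{\top})(B+\xi e_i e_j^{\top})^{\top}$, the worst-case bound on $\varphi'''$ is polynomially large rather than small. The way out — and the part I expect to be the crux — is to replace these worst-case operator-norm bounds by averaged and isotropic resolvent estimates at the hard edge: \emph{delocalization} of the least singular vector (making $\<v,(\partial_\xi M)v\>$ of order $1$ rather than $\sqrt n$), \emph{level repulsion} near the origin in the spectrum of $W(A)$ (keeping the gap between the two eigenvalues of $M$ nearest $0$ above a fixed negative power of $n$, hence the corresponding reduced resolvent polynomially bounded on the good event), and a local Marchenko--Pastur law for the normalized resolvent trace at imaginary scale just below $n^{-1}$. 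In practice this forces one to swap a resolvent proxy for $F(\sqrt n\,\smin)$ rather than $F(\sqrt n\,\smin)$ itself, and to verify that the third and fourth moments of the entries do \emph{not} survive into the limit — thanks to the chiral symmetry of $W(A)$ about the central eigenvalue — so that matching only the mean and variance already suffices, which is exactly why no moment-matching hypothesis beyond zero mean and unit variance appears in the theorem. Once these inputs are in place the swapping error is $\le n^{-c}$, and Edelman's Theorem~\ref{Edelman} identifies the common limiting law explicitly.
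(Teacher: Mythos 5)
The paper does not prove this theorem; it states it as a citation to \cite{TV 09}, so the comparison is against the argument of Tao and Vu rather than one given here. Your overall scaffolding is the right one: the Hermitian linearization $W(A)$, truncation under a high moment assumption, smoothing the indicator at scale $n^{-c}$, and a Lindeberg swap entry by entry is indeed the framework of \cite{TV 09}. The localization inputs you invoke (operator norm bound, lower bound on $\smin$, delocalization, level repulsion) are also all genuine ingredients.

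The serious gap is your treatment of the moment count. You terminate the Taylor expansion at third order, using only that the first two moments of $a_{ij}$ and $g_{ij}$ agree, and assert that the third and fourth moments ``do not survive into the limit --- thanks to the chiral symmetry of $W(A)$.'' This is not correct and is the crux you cannot wave past. Chiral symmetry makes the \emph{spectrum} of $W(A)$ symmetric about $0$, but it does not make the one-entry test function $\varphi(\xi)=\E_B F\bigl(\sqrt n\,\smin(B+\xi e_ie_j^{\top})\bigr)$ even in $\xi$: the background $B$ is a mixture of general (non-symmetric) and Gaussian entries, so replacing $\xi$ by $-\xi$ does not leave the distribution of $\smin$ invariant, and hence $\varphi'''(0)$ has no reason to vanish. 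More quantitatively, even with optimal delocalization ($\partial_\xi\smin\sim n^{-1}$) and level repulsion, the three-fold derivative of $F(\sqrt n\,\smin)$, with $F$ smoothed at scale $\eta=n^{-c}$, is \emph{not} of size $o(n^{-2})$, so $n^{2}\sup|\varphi'''|$ does not tend to zero. A second-moment-only Lindeberg swap does not close by itself.

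What \cite{TV 09} actually do is two-staged. First they prove a genuine \emph{Four Moment Theorem} for the least singular value: if the entries of two ensembles match to fourth order (off-diagonally), then the distributions of $\sqrt n\,\smin$ are $n^{-c}$ close; here the swapping Taylor expansion is carried to fifth order, and the gain of two extra factors of $\partial_\xi\smin\sim n^{-1}$ is precisely what overcomes the $n^{2}$ swaps, provided one has the delocalization and gap (level-repulsion) bounds you list. The second stage is the bridge from four matched moments to the two-moment hypothesis in the theorem: given an arbitrary atom distribution with mean $0$, variance $1$ and bounded high moment, one constructs an auxiliary atom distribution matching it to four moments while being Gaussian-divisible, and one then establishes universality directly for the Gaussian-divisible ensemble; chaining these two comparisons yields the stated two-moment universality. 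This reduction is the technical heart of the argument, and it is absent from your proposal --- your appeal to chiral symmetry cannot substitute for it. If you repair the proposal by carrying the Taylor expansion to fifth order for a Four Moment Theorem and then adding the moment-matching/Gaussian-divisible bridge, you would be reproducing the actual proof.
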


On a methodological level, this result may be compared in classical
probability theory to Berry-Esseen theorem \eqref{Berry-Esseen}
which establishes polynomial deviations from the limiting distribution,
while Theorems~\ref{square} and \ref{rectangular} bear a similarity with
large deviation results like \eqref{deviation} which give exponentially
small tail probabilities.

\paragraph{Sparsity and invertibility: a geometric proof of Theorem~\ref{square}}
We will now sketch the proof of Theorem~\ref{square} given in \cite{RV square}.
This argument is mostly based on geometric ideas, and it may be useful beyond
spectral analysis of random matrices.

Looking at  $\smin(A) = \min_{x \in S^{n-1}} \|Ax\|_2$
we see that our goal is to bound below $\|Ax\|_2$ uniformly for all unit vectors $x$.
We will do this separately for {\em sparse vectors} and for {\em spread vectors}
with two very different arguments. Choosing a small absolute constant $c_0>0$,
we first consider the class of sparse vectors
$$
\Sparse := \{ x \in S^{n-1} :\; |\supp(x)| \le c_0 n \}
$$
Establishing invertibility of $A$ on this class is relatively easy.
Indeed, when we look at $\|Ax\|_2$ for sparse vectors $x$ of fixed support $\supp(x)=I$
of size $|I| = c_0 n$,
we are effectively dealing with the $n \times c_0 n$ submatrix $A_I$ that consists
of the columns of $A$ indexed by $I$. The matrix $A_I$ is tall, so as we said below
Theorem~\ref{rectangular}, its smallest singular value can be estimated using
the standard $\e$-net argument. This gives $\smin(A_I) \ge c n^{1/2}$ with
probability at least $1 - 2e^{-n}$. This allows us to further take the union bound over
$\binom{n}{c_0 n} \le e^{n/2}$ choices of support $I$, and conclude that
with probability at least $1 - 2e^{-n/2}$ we have invertibility on all sparse vectors:
\begin{equation}                            \label{on sparse}
\min_{x \in \Sparse} \|Ax\|_2 = \min_{|I| \le c_0 n} \smin(A_I) \ge c n^{1/2}.
\end{equation}
We thus obtained a much stronger bound than we need, $n^{1/2}$ instead of $n^{-1/2}$.

Establishing invertibility of $A$ on non-sparse vectors is more difficult
because there are too many of them. For example, there are exponentially many
vectors on $S^{n-1}$ whose coordinates all equal $\pm n^{-1/2}$
and which have at least a constant distance from each other.
This gives us no hope to control such vectors using $\e$-nets, as any
nontrivial net must have cardinality at least $2^n$. So let us now focus on this most
difficult class of extremely non-sparse vectors
$$
\Spread := \{ x \in S^{n-1} :\; |x_i| \ge c_1 n^{-1/2} \text{ for
all } i \}.
$$
Once we prove invertibility of $A$ on these spread vectors, the argument can be completed for all vectors
in $S^{n-1}$ by an approximation argument. Loosely speaking,
if $x$ is close to $\Sparse$ we can treat $x$ as sparse,
otherwise $x$ must have at least $cn$ coordinates of magnitude $|x_i| = O(n^{-1/2})$, which
allows us to treat $x$ as spread.

An obvious advantage of spread vectors is that we know the magnitude
of all their coefficients. This motivates the following geometric invertibility argument.
If $A$ performs extremely poor so that $\smin(A) = 0$, then
one of the columns $X_k$ of $A$ lies in the span $H_k = \Span(X_i)_{i \ne k}$
of the others. This simple observation can be transformed into a quantitative argument.
Suppose $x = (x_1,\ldots,x_n) \in \R^n$ is a spread vector.
Then, for every $k=1,\ldots,n$, we have
\begin{align}                   \label{Ax spread}
\|Ax\|_2
&\ge \dist(Ax, H_k)
= \dist \Big( \sum_{i=1}^n x_i X_i, H_k \Big)
= \dist (x_k X_k, H_k)  \nonumber\\
&= |x_k| \cdot \dist (X_k, H_k)
\ge c_1 n^{-1/2} \; \dist(X_k, H_k).
\end{align}
Since the right hand side does not depend on $x$, we have proved
that
\begin{equation}                            \label{on spread}
\min_{x \in \Spread} \|Ax\|_2 \ge c_1 n^{-1/2} \; \dist(X_n, H_n).
\end{equation}

This reduces our task to the geometric problem of independent interest --
estimate {\em the distance between a random vector and an independent random hyperplane}.
The expectation estimate $1 \le \E \dist(X_n, H_n)^2 = O(1)$
follows easily by independence and moment assumptions.
But we need a lower bound with high probability, which is far from trivial.
This will make a separate story connected to
the Littlewood-Offord theory of {\em small ball probabilities},
which we discuss in Section~\ref{s: Littlewood-Offord}.
In particular we will prove in Corollary~\ref{distance} the optimal estimate
\begin{equation}                                        \label{missing distance}
\P (\dist(X_n, H_n) \le \e) \le C \e + c^n,
\quad \e \ge 0,
\end{equation}
which is simple for the Gaussian distribution (by rotation invariance)
and difficult to prove e.g. for the Bernoulli distribution.
Together with \eqref{on spread} this means that we proved invertibility on all spread vectors:
$$
\P \big( \min_{x \in \Spread} \|Ax\|_2 \le \e n^{-1/2} \big) \le C\e +  c^n,
\quad \e \ge 0.
$$
This is exactly the type of probability bound claimed in Theorem~\ref{square}.
As we said, we can finish the proof by
combining with the (much better) invertibility on sparse vectors in \eqref{on sparse},
and by an approximation argument.

\section{Littlewood-Offord theory}          \label{s: Littlewood-Offord}

\paragraph{Small ball probabilities and additive structure}
We encountered the following geometric problem in the previous section:
{\em estimate the distance between a random vector $X$ with independent
coordinates and an independent random hyperplane $H$ in $\R^n$}.
We need a lower bound on this distance with high probability.
Let us condition on the hyperplane $H$ and let
$a \in \R^n$ denote its unit normal vector.
Writing in coordinates  $a = (a_1,\ldots,a_n)$ and $X = (\xi_1, \ldots, \xi_n)$,
we see that
\begin{equation}                                        \label{dist sum}
\dist(X, H) = \< a, X \>  =  \Big| \sum_{i=1}^n a_i \xi_i \big|.
\end{equation}
We need to understand the distribution of sums of independent random
variables
$$
S = \sum_{i=1}^n a_i \xi_i, \quad \|a\|_2 = 1,
$$
where $a = (a_1,\ldots,a_n) \in \R^n$ is a given coefficient vector, and $\xi_1, \ldots, \xi_n$ are
independent identically distributed random variables with zero mean and unit variance.

Sums of independent random variables is a classical theme in probability theory.
The well-developed area of large deviation
inequalities like \eqref{deviation} demonstrates
that $S$ nicely concentrates around its mean.
But our problem is opposite as we need to show that $S$ is {\em not too concentrated}
around its mean $0$, and perhaps more generally around any real number.
Several results in probability theory
starting from the works of L\'evy \cite{Lev}, Kolmogorov \cite{Kol} and Ess\'{e}en \cite{Esseen 66}
were concerned with the spread of sums of independent random variables,
which is quantified as follows:

\begin{definition}
The {\em L\'evy concentration function} of a random variable $S$ is
$$
\LL(S, \e) = \sup_{v \in \R} \P ( |S-v| \le \e ),
\quad \e \ge 0.
$$
\end{definition}

L\'evy concentration function measures the {\em small ball probability} \cite{LS}, the likelihood
that $S$ enters a small interval.
For continuous distributions one can show that $\LL(S,\e) \lesssim \e$ for all $\e \ge 0$.
For discrete distributions this may be false. Instead, a new phenomenon arises
for discrete distributions which is unseen in large deviation theory: L\'evy concentration
function depends on the {\em additive structure} of the coefficient vector $a$.
This is best illustrated on the example where
$\xi_i$ are independent Bernoulli random variables ($\pm 1$ valued and symmetric).
For sparse vectors like $a = 2^{-1/2}(1,1,0,\ldots,0)$, L\'evy concentration function
can be large: $\LL(S,0) = 1/2$.
For spread vectors, Berry-Esseen's theorem \eqref{Berry-Esseen} yields a better bound:
\begin{equation}                            \label{a'}
\text{For } a' = n^{-1/2} (1,1,\ldots,1), \quad
\LL(S,\e) \le C(\e + n^{-1/2}).
\end{equation}
The threshold $n^{-1/2}$ comes from many cancelations in the sums $\sum \pm 1$ which occur
because all coefficients $a_i$ are equal. For less structured $a$,
fewer cancelations occur:
\begin{equation}                            \label{a''}
\text{For } a'' = n^{-1/2} \big(1 + \frac{1}{n}, 1 + \frac{2}{n}, \ldots, 1 + \frac{n}{n} \big), \quad
\LL(S,0) \sim n^{-3/2}.
\end{equation}
Studying the influence of additive structure of the coefficient vector $a$
on the spread of $S = \sum a_i \xi_i$
became known as the {\em Littlewood-Offord problem}. It was initially
developed by Littlewood and Offord \cite{LO}, Erd\"os and Moser \cite{Erdos 45, Erdos 65},
S\'arkozy and Szemer\'edi \cite{SS}, Halasz \cite{Halasz}, Frankl and F\"uredi \cite{FF}.
For example, if all $|a_i| \ge 1$ then $\LL(S,1) \le Cn^{-1/2}$ \cite{LO, Erdos 45},
which agrees with \eqref{a'}.
Similarly, a general fact behind \eqref{a''} is that if $|a_i - a_j| \ge 1$ for all $i \ne j$
then $\LL(S,1) \le Cn^{-3/2}$ \cite{Erdos 65, SS, Halasz}.

\paragraph{New results on L\'evy concentration function}
Problems of invertibility of random matrices motivated a recent revisiting 
of the Littlewood-Offord problem by Tao and Vu \cite{TV Annals, TV
Bulletin, TV RSA, TV FF}, the authors \cite{RV square, RV
rectangular}, Friedland and Sodin \cite{Friedland-Sodin}. Additive
structure of the coefficient vector $a$ is related to the shortest
arithmetic progression into which it embeds. This length is
conveniently expressed as the {\em least common denominator}
$\lcd(a)$ defined as the smallest $\theta > 0$ such that $\theta a
\in \Z^n \setminus 0$. Examples suggest that L\'evy concentration
function should be inversely proportional to the least common
denominator: $\lcd(a') = n^{1/2} \sim 1/\LL(S,0)$ in \eqref{a'} and
$\lcd(a'') = n^{3/2} \sim 1/\LL(S,0)$ in \eqref{a''}. This is not a
coincidence. But to state a general result, we will need to consider
a more stable version of the least common denominator. Given an
accuracy level $\a > 0$, we define the {\em essential least common
denominator}
$$
\lcd_\a(a) := \inf \big\{ \theta > 0: \; \dist(\theta a, \Z^n) \le \min( \frac{1}{10} \|\theta a\|_2, \a) \big\}.
$$
The requirement $\dist(\theta a, \Z^n) \le \frac{1}{10} \|\theta
a\|_2$ ensures approximation of $\theta a$ by non-trivial integer
points, those in a non-trivial cone in the direction of $a$. The
constant $\frac{1}{10}$ is arbitrary and it can be replaced by any
other constant in $(0,1)$. One typically uses this concept for
accuracy levels $\a = c \sqrt{n}$ with a small constant $c$ such as
$c = \frac{1}{10}$. The inequality $\dist(\theta a, \Z^n) \le \a$
yields that most of the coordinates of $\theta a$ are within a small
constant distance from integers. For such $\a$, in examples
\eqref{a'} and \eqref{a''} one has as before $\lcd_\a(a') \sim
n^{1/2}$ and $\lcd_\a(a'') \sim n^{3/2}$. Here we state and sketch a
proof of a general Littlewood-Offord type result from \cite{RV
rectangular}.

\begin{theorem}[L\'evy concentration function via additive structure]       \label{sbp}
Let $\xi_1, \ldots, \xi_n$ be independent identically distributed mean zero random variables,
which are well spread: $p: = \LL(\xi_k, 1) < 1$.
Then, for every coefficient vector $a = (a_1,\ldots,a_n) \in S^{n-1}$
and every accuracy level $\a > 0$, the sum $S = \sum_{i=1}^n a_i \xi_i$ satisfies
\begin{equation}                                       \label{sbp eq}
\LL(S,\e) \le C \e + C/\lcd_\a(a) + C e^{- c\a^2},
\quad \e \ge 0,
\end{equation}
where $C, c > 0$ depend only on the spread $p$.
\end{theorem}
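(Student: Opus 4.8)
The plan is to estimate the Lévy concentration function $\LL(S,\e)$ by the classical route of Fourier-analytic smoothing, following Halász and Esséen. First I would recall the elementary bound that for any random variable $S$ and any $\e > 0$,
$$
\LL(S,\e) \le C\e \int_{-1/\e}^{1/\e} |\phi(t)| \, dt,
$$
where $\phi(t) = \E e^{i t S}$ is the characteristic function of $S$; this comes from convolving with a fixed smooth bump supported on scale $\e$ whose Fourier transform is bounded below on $[-1/\e,1/\e]$. Since $S = \sum a_i \xi_i$ with the $\xi_i$ independent, $\phi(t) = \prod_{i=1}^n \phi_i(a_i t)$ where $\phi_i$ is the characteristic function of $\xi_i$. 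So the whole problem reduces to a pointwise upper bound on $\prod_i |\phi_i(a_i t)|$ and then integrating it over $t \in [-1/\e, 1/\e]$.

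The heart of the matter is the pointwise bound. Using $|z| \le \exp(-\tfrac12(1-|z|^2))$ and the spread hypothesis $p = \LL(\xi_k,1) < 1$, a standard symmetrization argument (replace $\xi_k$ by $\xi_k - \xi_k'$, whose characteristic function is $|\phi_k|^2 \ge 0$) gives a lower bound of the form $1 - |\phi_k(s)|^2 \ge c_p \, \E \|s(\xi_k - \xi_k')\|_{\R/\Z}^2$, i.e. controlled below by the mean squared distance of $s\xi_k$ to the integer lattice, with $c_p$ depending only on $p$. Summing over $i$ with $s = a_i t$, one obtains
$$
\prod_{i=1}^n |\phi_i(a_i t)| \le \exp\Big( -c_p \sum_{i=1}^n f(a_i t) \Big),
\quad f(s) := \E \, \dist(s(\xi-\xi'),\Z)^2,
$$
and the quantity $\sum_i f(a_i t)$ is, up to constants, exactly $\dist(t a, \Z^n)^2$ as long as $ta$ stays in the regime where $f$ behaves quadratically. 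This is where the essential least common denominator enters: for $|t| < \lcd_\a(a)$, by the very definition of $\lcd_\a$, either $\dist(ta,\Z^n) > \tfrac1{10}\|ta\|_2 = \tfrac1{10}|t|$ or $\dist(ta,\Z^n) > \a$, so in either case $\sum_i f(a_i t) \gtrsim \min(t^2, \a^2)$. Plugging this back yields $|\phi(t)| \le \exp(-c\min(t^2,\a^2))$ on $|t| < \lcd_\a(a)$.

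It remains to integrate. Splitting $\int_{-1/\e}^{1/\e}|\phi(t)|\,dt$ into $|t| \le \lcd_\a(a)$ and $\lcd_\a(a) < |t| \le 1/\e$: on the first range we use $|\phi(t)| \le \exp(-c\min(t^2,\a^2))$, whose integral is bounded by $C + C\lcd_\a(a) e^{-c\a^2}$ (the constant piece from $|t|\le 1$, a Gaussian tail from $1 \le |t| \le \a$, and a flat contribution $e^{-c\a^2}$ times the length on $\a \le |t| \le \lcd_\a(a)$); on the second range $|t| > \lcd_\a(a)$ one repeats the argument periodically — the bound $\prod|\phi_i(a_it)| \le \exp(-c\min(\dist(ta,\Z^n)^2,\a^2))$ is valid for all $t$ and the integrand has period-like decay, contributing at most $C(1/\e)/\lcd_\a(a)$ after the outer factor $C\e$. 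Collecting terms and multiplying by $C\e$ gives $\LL(S,\e) \le C\e + C/\lcd_\a(a) + C e^{-c\a^2}$, as claimed. The main obstacle is the pointwise estimate on $|\phi(t)|$ for $|t|$ comparable to or larger than $\lcd_\a(a)$: one must handle the possibility that $ta$ is close to a nonzero lattice point and argue that, because $\|a\|_2 = 1$ forces many coordinates to be genuinely nonzero and the lattice point must be "nontrivial" (the $\tfrac1{10}\|ta\|_2$ cutoff), enough coordinates $a_i t$ stay away from $\Z$ to keep the product small; getting clean constants here, uniformly in $a$, is the delicate part.
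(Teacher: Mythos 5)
Your overall Fourier/Ess\'een route is the same as the paper's: bound $\LL(S,\e)$ by an integral of $|\phi_S|$ over a window of length $\sim 1/\e$, and control the integrand pointwise through $\dist(ta,\Z^n)$. Your pointwise bound via symmetrization is the right idea for the general case (the paper itself writes out only the Bernoulli case and notes that the general case needs an additional argument), although you gloss over the real subtlety there: the quantity $\E\dist\bigl(\tfrac{a_i t\,\bar\xi}{2\pi},\Z\bigr)^2$ involves the \emph{random} variable $\bar\xi=\xi-\xi'$, and turning it into $\dist(a_i t,\Z)^2$ (or a fixed rescaling thereof, uniformly over the distribution) requires conditioning on $\bar\xi$ lying in a bounded window, which is where the spread constant $p$ and the $\tfrac1{10}$-cone and accuracy $\a$ in the definition of $\lcd_\a$ actually earn their keep.

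The genuine divergence, and the place where your sketch has a gap, is the range $|t|>\lcd_\a(a)$. You split the integral there and appeal to ``period-like decay contributing at most $C(1/\e)/\lcd_\a(a)$,'' but this is precisely the step you yourself flag as delicate, and as written it is not justified: near each multiple of $\lcd_\a(a)$ the product $\prod_j|\phi_j(a_jt)|$ can again be close to $1$. To repair it you would need a sublevel-set lemma -- on any interval $I$ of length $\lcd_\a(a)$ one has $|\{t\in I:\dist(ta,\Z^n)\le\delta\}|\lesssim\delta$ for $\delta<\a/2$, which follows from the definition of $\lcd_\a$ via the triangle inequality $\dist((t-t')a,\Z^n)\le\dist(ta,\Z^n)+\dist(t'a,\Z^n)$ -- and then integrate by parts (a Hal\'asz-type layering) to get an $O(1)$ contribution per period. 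The paper sidesteps all of this with a one-line reduction you are missing: it assumes without loss of generality that $\lcd_\a(a)\ge\tfrac1{\pi\e}$, because otherwise one may replace $\e$ by $\e':=\tfrac1{\pi\lcd_\a(a)}>\e$, use monotonicity of $\LL(S,\cdot)$, and the $C\e'$ term then \emph{is} the $C/\lcd_\a(a)$ term of the conclusion. With that reduction, the Ess\'een integral (over $\theta\in[-1,1]$ for $Z=S/\e$) never leaves the regime $|\theta|/(\pi\e)\le\lcd_\a(a)$, where the definition of $\lcd_\a$ immediately gives $\dist\bigl(\tfrac{\theta}{\pi\e}a,\Z^n\bigr)\ge\min\bigl(\tfrac{|\theta|}{10\pi\e},\a\bigr)$, and a single split of $\exp(-\tfrac12 f^2)$ into a Gaussian piece plus $e^{-\a^2/2}$ finishes the proof. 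Internalizing that reduction turns the hardest part of your argument into a triviality.
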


\begin{proof}
A classical Esseen's concentration inequality \cite{Esseen 66}
bounds the L\'evy concentration function of an arbitrary random
variable $Z$ by the $L_1$ norm of its characteristic function
$\phi_Z(\theta)=\E \exp ( i \theta Z)$ as follows:
\begin{equation}                                        \label{Esseen}
\LL(Z,1) \le C \int_{-1}^{1} |\phi_Z(\theta)| \, d \theta.
\end{equation}
One can prove this inequality using Fourier inversion formula, see \cite[Section 7.3]{TV book}.

We will show how to prove Theorem~\ref{sbp} for Bernoulli random
variables $\xi_i$; the general case   requires an additional
argument. Without loss of generality we can assume that
$\lcd_{\a}(a) \ge \frac{1}{\pi \e}$. Applying \eqref{Esseen} for
$Z=S/\e$, we obtain by independence that
$$
\LL(S,\e) \le C \int_{-1}^{1} |\phi_S(\theta/\e)| \, d \theta =C
\int_{-1}^{1} \prod_{j=1}^n |\phi_j(\theta/\e)| \, d \theta,
$$
where $\phi_j(t)=\E \exp( ia_j \xi_j t)=\cos(a_j t)$. The inequality
$|x| \le \exp(-\frac{1}{2}(1-x^2))$ which is valid for all $x \in
\R$ implies that
$$
|\phi_j(t)|
\le \exp \Big( - \frac{1}{2} \sin^2 (a_j t) \Big)
\le \exp \Big( -\frac{1}{2} \dist(\frac{a_j t}{\pi}, \Z)^2 \Big).
$$
Therefore
\begin{equation}                                        \label{LL}
\LL(S,\e)
\le C \int_{-1}^1 \exp \Big( -\frac{1}{2}
    \sum_{j=1}^n  \dist \big(\frac{a_j \theta}{\pi \e}, \Z \big)^2 \Big) \, d\theta
= C \int_{-1}^{1}
  \exp \Big( -\frac{1}{2} f^2(\theta) \Big) \, d \theta
\end{equation}
where $f(\theta) = \dist \big( \frac{\theta}{\pi \e} a, \Z^n \big)$.
Since $\lcd_{\a}(a) \ge \frac{1}{\pi \e}$, the definition of the essential least
common denominator implies that for every $\theta \in [-1,1]$ we have
$f(\theta) \ge \min (\frac{\theta}{10 \pi \e} \|a\|_2, \a)$.
Since by assumption $\|a\|_2=1$, it follows that
$$
\exp \Big( -\frac{1}{2} f^2(\theta) \Big)
\le \exp \Big( -\frac{1}{2} \Big( \frac{\theta}{10 \pi \e} \Big)^2 \Big)
+\exp (-\a^2/2).
$$
Substituting this into \eqref{LL} yields $\LL(S,\e) \le C_1 (\e + 2\exp (-\a^2/2))$
as required.
\end{proof}

Theorem ~\ref{sbp} justifies our empirical observation that L\'evy concentration function
is proportional to the amount of structure in the coefficient vector,
which is measured by the (reciprocal of) its essential least common denominator.
As we said, this result is typically used for accuracy level
$\a = c \sqrt{n}$ with some small positive constant $c$. In this case, the term
$C e^{- c\a^2}$ in \eqref{sbp eq} is exponentially small in $n$ (thus negligible in applications),
and the term $C\e$ is optimal for continuous distributions.

Theorem~\ref{sbp} performs best for totally unstructured coefficient vectors $a$,
those with exponentially large $\lcd_\a(a)$. Heuristically, this should be the case
for random vectors,  as randomness should destroy any structure. While this is not
true for general vectors with independent coordinates (e.g. for equal
coordinates with random signs), it is true for {\em normals of random hyperplanes}:

\begin{theorem}[Random vectors are unstructured \cite{RV square}]                \label{unstructured}
Let $X_i$ be random vectors in $\R^n$ whose coordinates
are independent and identically distributed subgaussian random variables
with zero mean and unit variance. Let $a \in \R^n$ denote a unit normal vector
of $H = \Span(X_1,\ldots,X_{n-1})$. Then, with probability at least $1 - e^{-cn}$,
$$
\lcd_\a (a) \ge e^{cn} \quad \text{for } \a = c \sqrt{n},
$$
where $c>0$ depends only on the subgaussian moment.
\end{theorem}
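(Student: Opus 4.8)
The plan is to show that the unit normal $a$ of a random hyperplane is, with overwhelming probability, ``totally unstructured'' in the sense that $\lcd_\a(a)$ is exponentially large. I would prove this by a union bound over the low-complexity net of potential structured directions, showing that \emph{no} vector $b$ with small least common denominator can be orthogonal to all of $X_1,\dots,X_{n-1}$ simultaneously. The key point is that if some $\theta b$ is close to the integer lattice with $\theta$ not too large, then $b$ is close to a rescaled integer point, and there are only exponentially few such candidate directions up to the relevant accuracy; meanwhile, for each \emph{fixed} direction $b \in S^{n-1}$, the inner product $\langle b, X_i\rangle$ is a sum of the form treated in Theorem~\ref{sbp}, so it is unlikely to be very small, and by independence it is exponentially unlikely that all $n-1$ of these inner products are simultaneously small.

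First I would fix a parameter $D = e^{cn}$ (the target lower bound) and consider the event that $\lcd_\a(a) < D$. On this event there is a $\theta \in (0,D)$ with $\dist(\theta a, \Z^n) \le \min(\tfrac{1}{10}\|\theta a\|_2, \a)$; write $\theta a = p + h$ with $p \in \Z^n$ and $\|h\|_2 \le \a = c\sqrt n$. Then $p$ is a nonzero integer point with $\|p\|_2 \le \theta\|a\|_2 + \|h\|_2 \lesssim D$, and $a$ lies within angular distance $O(\a/\|p\|_2)$ of the direction $p/\|p\|_2$. So the direction of $a$ is captured, up to error comparable to $\a/\|p\|_2$, by a point of the form $p/\|p\|_2$ with $p$ integer of norm at most $O(D)$. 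Next I would estimate the number of relevant candidate directions: integer points of norm $\le R$ number at most $(CR)^n$, but the key refinement is that we only need these up to angular accuracy $\sim \a/\|p\|_2$, and a volumetric count shows the number of distinct directions arising this way is still only $e^{Cn}$ for $D = e^{cn}$ with $c$ small enough relative to the constants. So it suffices to take a union bound over a net $\mathcal{N}$ of such directions with $|\mathcal{N}| \le e^{Cn}$.

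For each fixed $b \in \mathcal{N}$, I would use Theorem~\ref{sbp} to control $\P(|\langle b, X_i\rangle| \le \rho)$. Since $a \perp X_i$ but we have only approximated $a$ by $b$, the quantity $|\langle b, X_i\rangle|$ is small whenever $a$ is close to the lattice-rescaled direction near $b$; the net spacing is chosen so that $|\langle b, X_i\rangle| = |\langle b - a, X_i\rangle| \lesssim \|b-a\|_2 \cdot \|X_i\|_2$, and using $\|X_i\|_2 \lesssim \sqrt n$ with high probability (Proposition~\ref{smax rough}) one gets that on the bad event $|\langle b, X_i\rangle| \le \rho$ for a suitable $\rho$. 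Now the crucial input: for a fixed structured $b$ with $\lcd_\a(b)$ itself small, Theorem~\ref{sbp} does \emph{not} directly give smallness --- instead one argues the other way. Rather than bounding $\langle b, X_i\rangle$ for structured $b$, the right move is to observe that $\langle a, X_i\rangle = 0$ exactly, and the event $\{\lcd_\a(a) < D\}$ forces $a$ to be near one of the structured directions $b \in \mathcal{N}$; then, since each $X_i$ has independent coordinates, for each fixed $b$ one shows $\P(\dist(\langle b, X_i\rangle, \mathrm{near\ zero})) \le 1 - c_0$ is \emph{bounded away from $1$} simply because $b$ is a unit vector with at least one coordinate of size $\gtrsim n^{-1/2}$ and the entries have $\LL(\xi, 1) < 1$; hence $\P(\text{all } n-1 \text{ inner products small}) \le (1-c_0)^{n-1}$. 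Multiplying by $|\mathcal{N}| \le e^{Cn}$ and choosing $c$ small so that $e^{Cn}(1-c_0)^n \le e^{-cn}$ completes the bound.

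The main obstacle is the counting step: one must show that the number of directions $p/\|p\|_2$ over all nonzero integer $p$ with $\|p\|_2 \le D = e^{cn}$, \emph{when grouped at angular resolution $\a/\|p\|_2$}, is only $e^{O(n)}$ rather than $e^{O(cn^2)}$. This requires a careful layered volume argument: at scale $\|p\|_2 \approx 2^k$ there are at most $(C2^k)^n$ integer points, but they occupy a sphere of radius $2^k$ where caps of angular radius $\a/2^k \sim \sqrt n / 2^k$ have a definite proportion of the total area, so the number of \emph{distinct} such caps is bounded independently of $2^k$ by $(C\sqrt n/\a)^{O(n)}$-type quantities --- no, more precisely one must balance $\|h\|_2 \le \a$ against $\|p\|_2$, and the right statement is that the \emph{number of integer points $p$ that can arise at all} (those within $\a$ of the cone $\{\dist(\cdot,\Z^n)\le \tfrac1{10}\|\cdot\|_2\}$ scaled by some $\theta < D$) is $e^{O(n)}$; getting the constant in this exponent strictly below the constant $c_0$ coming from $\LL(\xi,1)<1$, so that the union bound closes, is where all the care goes. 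The remainder --- the net spacing, the $\|X_i\|_2 = O(\sqrt n)$ bound, the elementary small-ball estimate $\P(|\langle b, X_i\rangle|\le \rho) \le 1 - c_0$ for unit $b$ --- is routine.
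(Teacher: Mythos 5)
There is a genuine gap, and it is in the two places you yourself flag as the hard parts: the size of the net and the small--ball input. Unfortunately your resolution of both is wrong, and in a way that cannot be patched within the scheme as written.

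\emph{The net count $|\mathcal{N}|\le e^{Cn}$ is incorrect.} The candidate structured directions are $p/\|p\|_2$ with $p\in\Z^n\setminus\{0\}$ and $\|p\|_2$ running all the way up to $D=e^{cn}$. At dyadic scale $\|p\|_2\approx D_0$ the integer points number about the volume of the ball, i.e.\ $(CD_0/\sqrt n)^n$, and the integer directions $p/\|p\|_2$ are roughly $1/D_0$-separated on the sphere. Grouping them into caps of angular radius $\a/D_0\sim\sqrt n/D_0$ collapses each cap's $\sim(\sqrt n)^{n-1}$ integer directions into one representative, leaving on the order of $(CD_0/\a)^{n-1}=(CD_0/(c\sqrt n))^{n-1}$ caps. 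For $D_0=D=e^{cn}$ this is $e^{cn^2+O(n\log n)}$, not $e^{Cn}$: the exponent scales like $n\log D$, which is $\Theta(n^2)$ for exponentially large $D$. Your ``layered volume argument'' asserts the number of caps is ``bounded independently of $2^k$,'' but caps of radius $\a/2^k$ on $S^{n-1}$ cost $(2^k/\a)^{n-1}$ of them, which does grow with $k$.

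\emph{The small--ball input is far too weak, and discards exactly the mechanism that makes the proof work.} You bound $\P(|\<b,X_i\>| \text{ small})\le 1-c_0$, a bound uniform in the scale and independent of $D_0$. But the scale at which $|\<b,X_i\>|$ must be small is $\|b-a\|_2\|X_i\|_2\lesssim (\a/D_0)\cdot\sqrt n = c\,n/D_0$, which is extremely small when $D_0$ is large. For $b=p/\|p\|_2$ with $p$ primitive and $\|p\|_2\approx D_0$ one has $\lcd_\a(b)\lesssim D_0$, and Theorem~\ref{sbp} then gives the much stronger bound $\LL(\<b,X_i\>,s)\lesssim s+1/D_0\lesssim \a/D_0$ at the relevant scale. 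Tensorizing over the $n-1$ independent rows yields a per--net--point probability of order $(C\a/D_0)^{n-1}=(Cc\sqrt n/D_0)^{n-1}$, which \emph{decreases} in $D_0$. This is the crucial balance: at scale $D_0$ the net has $\sim(CD_0/\sqrt n)^n$ points and the probability per point is $\sim(Cc\sqrt n/D_0)^{n-1}$; the product is $\sim(C'c)^{n-1}\,D_0/\sqrt n$, and summing over dyadic $D_0\le e^{cn}$ closes precisely because the accuracy constant $c$ in $\a=c\sqrt n$ can be taken small enough that $(C'c)^{n-1}$ beats $e^{cn}$. Your bound $(1-c_0)^{n-1}$ is a constant in $D_0$, so it has no chance against a net whose cardinality grows in $D_0$; and even against the (incorrect) $e^{Cn}$ bound it would require $C<-\log(1-c_0)$, which fails since $C$ is a covering constant of order at least $\log 3$ while $-\log(1-c_0)$ can be arbitrarily small. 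Your closing sentence asks to ``choose $c$ small so that $e^{Cn}(1-c_0)^n\le e^{-cn}$,'' but shrinking the $c$ in the conclusion's $e^{-cn}$ does not help; what must shrink is the net exponent $C$, and it cannot.

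In short, the high--level plan (union bound over rational directions, small--ball for each fixed direction) is the right one, but you must use the full strength of Theorem~\ref{sbp}, applied to each $b=p/\|p\|_2$ at the scale $\a/D_0$, with the improvement $\LL\lesssim 1/D_0$ coming from $\lcd_\a(b)\approx D_0$. That $D_0$-dependent gain is what compensates the $D_0$-dependent growth of the net, and it is the one ingredient your argument omits.
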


Therefore for random normals $a$, Theorem~\ref{sbp} yealds with high
probability
a very strong bound on L\'evy concentration function:
\begin{equation}                                        \label{best LL}
\LL(S,\e) \le C \e + c^n,
\quad \e \ge 0.
\end{equation}
This brings us back to the distance problem considered in the beginning of this
section, which motivated our study of L\'evy concentration function:

\begin{corollary}[Distance between random vectors and hyperplanes \cite{RV square}]   \label{distance}
Let $X_i$ be random vectors as in Theorem~\ref{unstructured}, and
$H_n = \Span(X_1, \ldots, X_{n-1})$. Then
$$
\P \big( \dist(X_n, H_n) \le \e \big) \le C \e + c^n,
\quad \e \ge 0,
$$
where $C, c>0$ depend only on the subgaussian moment.
\end{corollary}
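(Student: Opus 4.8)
The plan is to derive the corollary from the Littlewood--Offord estimate of Theorem~\ref{sbp} by a conditioning argument. First I would fix a realization of the vectors $X_1,\ldots,X_{n-1}$. Almost surely these are linearly independent, so $H_n$ is a genuine hyperplane possessing a (measurably chosen) unit normal $a=a(X_1,\ldots,X_{n-1})$; crucially $a$ depends on $X_1,\ldots,X_{n-1}$ only, hence is independent of $X_n$. Writing $X_n=(\xi_1,\ldots,\xi_n)$ in coordinates, the identity \eqref{dist sum} reads $\dist(X_n,H_n)=|\langle a,X_n\rangle|$, so after conditioning on $X_1,\ldots,X_{n-1}$ the distance equals $|S|$, where $S=\sum_{i=1}^n a_i\xi_i$ is a weighted sum of the independent coordinates of $X_n$ with a now-\emph{deterministic} coefficient vector $a\in S^{n-1}$. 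Hence
$$
\P\big(\dist(X_n,H_n)\le\e \;\big|\; X_1,\ldots,X_{n-1}\big)\;\le\;\LL(S,\e),\qquad S:=\sum_{i=1}^n a_i\xi_i .
$$

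Next I would bound $\LL(S,\e)$. The coordinates $\xi_k$ are well spread, $\LL(\xi_k,1)<1$ (a property of the classes of entries under consideration), so Theorem~\ref{sbp} applies; taking the accuracy level $\a=c\sqrt n$ with $c$ a small constant it gives
$$
\LL(S,\e)\;\le\;C\e+\frac{C}{\lcd_\a(a)}+Ce^{-c\a^2}\;=\;C\e+\frac{C}{\lcd_\a(a)}+Ce^{-c'n}.
$$
Now I would invoke Theorem~\ref{unstructured}, applied exactly to the normal $a$ of the span of $X_1,\ldots,X_{n-1}$: the event $E:=\{\lcd_\a(a)\ge e^{cn}\}$ is measurable with respect to $X_1,\ldots,X_{n-1}$ and satisfies $\P(E)\ge 1-e^{-cn}$. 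On $E$ the middle term above is at most $Ce^{-cn}$, so there $\LL(S,\e)\le C\e+c^n$; this is precisely \eqref{best LL}.

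It remains to average over the conditioning. Since $\LL(S,\e)\le 1$ always, splitting on $E$ yields
$$
\P\big(\dist(X_n,H_n)\le\e\big)\;\le\;\P(E^c)+\E\big[\one_E\,\LL(S,\e)\big]\;\le\;e^{-cn}+\big(C\e+c^n\big)\;\le\;C\e+c^n
$$
after relabelling the constants $C>0$ and $c\in(0,1)$, which is the claimed bound. There is no serious obstacle inside this derivation: the only points requiring attention are the measurability and independence bookkeeping in the conditioning step (so that $a$ may legitimately be frozen while one averages over $X_n$), the almost-sure non-degeneracy of $X_1,\ldots,X_{n-1}$, and the verification of the well-spread hypothesis of Theorem~\ref{sbp}. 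The genuine difficulty of the whole circle of ideas lies in Theorem~\ref{unstructured} --- that the normal of a random hyperplane has exponentially large essential least common denominator --- which is the heart of \cite{RV square}.
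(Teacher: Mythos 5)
Your proposal is correct and takes essentially the same route as the paper, whose proof of Corollary~\ref{distance} is a one-liner: write $\dist(X_n,H_n)$ as a sum of independent random variables via \eqref{dist sum} and invoke \eqref{best LL}. You are merely filling in the conditioning bookkeeping (freezing $X_1,\ldots,X_{n-1}$ and hence $a$, applying Theorem~\ref{sbp} with $\alpha=c\sqrt n$, splitting on the event from Theorem~\ref{unstructured}, then averaging), which is exactly what is implicit in the paper's chain Theorem~\ref{sbp} $\to$ Theorem~\ref{unstructured} $\to$ \eqref{best LL} $\to$ Corollary~\ref{distance}.

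One small caveat you inherit from the paper's loose phrasing: the hypothesis $\LL(\xi_k,1)<1$ is \emph{not} automatic for zero-mean unit-variance subgaussian variables (Bernoulli $\pm1$ has $\LL(\xi,1)=1$). The actual anti-concentration hypothesis used in \cite{RV square} is on the symmetrized variable, e.g.\ $\P(|\xi-\xi'|\ge 1)\ge p>0$ for an independent copy $\xi'$, which \emph{does} hold for Bernoulli and all non-degenerate unit-variance laws. This is an imprecision in the survey's statement of Theorem~\ref{sbp}, not a gap in your argument, but it is worth flagging rather than asserting $\LL(\xi_k,1)<1$ as ``a property of the classes of entries under consideration.''
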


\begin{proof}
As was noticed in \eqref{dist sum}, we can write $\dist(X_n, H_n)$
as a sum of independent random variables, and then bound it using
\eqref{best LL}.
\end{proof}

Corollary~\ref{distance} offers us exactly the missing piece \eqref{missing distance}
in our proof of the invertibility Theorem~\ref{square}. This completes our analysis
of invertibility of square matrices.

\begin{remark}
These methods generalize to rectangular matrices \cite{RV rectangular, V product}.
For example, Corollary~\ref{distance} can be extended to compute the distance between
random vectors and subspaces of arbitrary dimension \cite{RV rectangular}:
for $H_n = \Span(X_1, \ldots, X_{n-d})$ we have $(\E \dist(X_n, H_n)^2)^{1/2} = \sqrt{d}$
and
$$
\P \big( \dist(X_n, H_n) \le \e \sqrt{d} \big) \le (C \e)^d + c^n,
\quad \e \ge 0.
$$
\end{remark}

\section{Applications}              \label{s: applications}

The applications of non-asymptotic theory of random matrices are numerous, and we cannot cover
all of them in this note. Instead we concentrate on three different results pertaining to
the classical random matrix theory (Circular Law), signal processing (compressed sensing), and
geometric functional analysis and theoretical computer science (short Khinchin's inequality and 
Kashin's subspaces).

\paragraph{Circular law}
Asymptotic theory of random matrices provides an important source of heuristics for
non-asymptotic results. We have seen an illustration of this in the analysis of the extreme singular
values. This interaction between the asymptotic and non-asymptotic
theories goes the other way as well, as good non-asymptotic bounds are sometimes crucial in proving
the limit laws. One remarkable example of this is the circular law which we will discuss now.

Consider a family of $n \times n$ matrices $A$ whose entries are independent copies of a
random variable $X$ with mean zero and unit variance. Let $\mu_n$ be the empirical measure of the eigenvalues
of the matrix $B_n = \frac{1}{\sqrt{n}}A_n$, i.e. the Borel probability measure on $\C$ such that $\mu_n(E)$
is the fraction of the eigenvalues of $B_n$ contained in $E$.
A long-standing conjecture in random matrix theory, which is called the circular law,
suggested that {\em the measures $\mu_n$ converge to the normalized Lebesgue measure on the unit
disc}. The convergence here can be understood in the same sense as in the Wigner's semicircle law.
The circular law was originally proved by Mehta \cite{Mehta} for random matrices with standard normal
entries. The argument used the explicit formula for joint density of the eigenvalues, so it
could not be extended to other classes of random matrices. While the formulation of Wigner's semicircle law
and the circular law look similar, the methods used to prove the former are not applicable to the latter.
The reason is that the spectrum of a general matrix, unlike that of a Hermitian matrix, is unstable:
a small change of the entries may cause a significant change of the spectrum (see \cite{BS}).
Girko \cite{Girko}
introduced a new approach to the circular law based on considering the real part of
the Stieltjes transform of measures $\mu_n$. For $z=x+iy$ the real Stieltjes transform is defined
by the formula
\[
S_{nr}(z)=\text{Re}  \Big( \frac{1}{n} \text{Tr} (B_n- z I_n)^{-1} \Big)
=  -\frac{\partial}{\partial x} \Big( \frac{1}{n} \log |\text{det}(B_n-zI)| \Big).
\]
Since $|\text{det}(B_n-zI)|^2=\text{det} (B_n-zI)(B_n-zI)^* $, this is the same as
\[
S_{nr}(z)
  = -\frac{1}{2} \frac{\partial}{\partial x} \Big( \frac{1}{n} \log |\text{det}(B_n-zI)(B_n-zI)^*| \Big)
  = - \frac{1}{2} \frac{\partial}{\partial x} \Big( \frac{1}{n} \sum_{j=1}^n \log s_j^{(n)}(z) \Big),
\]
where $s_1^{(n)}(z) \ge \ldots \ge s_n^{(n)}(z) \ge 0$
are the eigenvalues of the {\em Hermitian} matrix $(B_n-zI)(B_n-zI)^*$,
or in other words, the squares of the singular values of the matrix $V_n=B_n-zI$.
Girko's argument reduces the proof of the circular law to the convergence of real Stieltjes transforms,
and thus to the behavior of the sum above.
The logarithmic function is unbounded at $0$ and $\infty$.
To control the behavior near $\infty$, one has to use the bound
for the largest singular value of $V_n$, which is relatively easy.
The analysis of the behavior near $0$ requires bounds on the
smallest singular value of $V_n$, and is therefore more difficult.

Girko's approach was implemented by Bai \cite{Bai 97}, who proved
the circular law for random matrices whose entries have bounded sixth moment
and bounded density. The bounded density condition was sufficient to take care
of the smallest singular value problem.
This result was the first manifestation of the universality of the circular law.
Still, it did not cover some important classes of random matrices,
in particular random Bernoulli matrices. 
The recent results on the smallest singular value led to a significant progress on establishing the universality of the circular law.
A crucial step was done by G\"otze and Tikhomirov \cite {Gotze-Tikhomirov 07}
who extended the circular law to all subgaussian matrices using \cite{R square}.
In fact, the results of \cite{Gotze-Tikhomirov 07} actually extended it to all random
entries with bounded fourth moment. This was further extended to random variables having
bounded moment $2+\e$ in \cite{Gotze-Tikhomirov 08, TV Circular1}.
Finally, in \cite{TV Circular2} Tao and Vu  proved the Circular Law in full generality,
with no assumptions besides the unit variance. Their approach was based on the smallest singular value bound from \cite{TV Circular1} and a novel {\em replacement principle} which allowed them to treat the other singular values.

\paragraph{Compressed Sensing}
Non-asymptotic random matrix theory provides a right context
for the analysis of random measurements in the newly developed area of compressed sensing,
see the ICM 2006 talk of Candes \cite{Candes ICM}.
Compressed sensing is an area of information theory and signal processing
which studies efficient techniques to reconstruct a signal
from a small number of measurements by utilizing the prior knowledge that
the signal is sparse \cite{Candes-Wakin intro}.

Mathematically, one seeks to reconstruct an unknown signal $x \in \R^n$
from some $m$ linear measurements viewed as a vector $Ax \in \R^m$,
where $A$ is some known $m \times n$ matrix called the {\em measurement matrix}.
In the interesting case $m < n$, the
problem is underdetermined and we are interested in the sparsest solution:
\begin{equation}                    \label{non-convex}
\text{minimize }  \|x^*\|_0 \text{ subject to } Ax^* = Ax,
\end{equation}
where $\|x\|_0 = |\supp(x)|$.
This optimization problem is highly non-convex and computationally intractable.
So one considers the following convex relaxation of \eqref{non-convex}, which can be efficiently solved
by convex programming methods:
\begin{equation}                    \label{convex}
\text{minimize }  \|x^*\|_1 \text{ subject to } Ax^* = Ax,
\end{equation}
where $\|x\|_1 = \sum_{i=1}^n |x_i|$ denotes the $\ell_1$ norm.

One would then need to find conditions when
problems \eqref{non-convex} and \eqref{convex} are equivalent.
Candes and Tao \cite{CT decoding} showed that this occurs when the measurement matrix $A$ is
a {\em restricted isometry}. For an integer $s \le n$,
the restricted isometry constant $\d_s(A)$ is the smallest number $\d \ge 0$
which satisfies
\begin{equation}                            \label{RIP}
(1-\d) \|x\|_2^2 \le \|Ax\|_2^2 \le (1+\d) \|x\|_2^2
\quad \text{for all } x \in \R^n, \; |\supp(x)| \le s.
\end{equation}
Geometrically, the restricted isometry property guarantees that the geometry of
$s$-sparse vectors $x$ is well preserved by the measurement matrix $A$.
In turns out that in this situation one can reconstruct $x$
from $Ax$ by the convex program \eqref{convex}:

\begin{theorem}[Sparse reconstruction using convex programming \cite{CT decoding}]
Assume $\d_{2s} \le c$. Then the solution of \eqref{convex}
equals $x$ whenever $|\supp(x)| \le s$.
\end{theorem}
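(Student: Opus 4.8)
The statement to prove is: if $\delta_{2s}(A) \le c$ for a suitable small constant $c$, then the minimizer of \eqref{convex} coincides with the true $s$-sparse signal $x$. I would prove this by the now-standard null-space / cone argument. Write $x^*$ for a solution of \eqref{convex} and set $h = x^* - x$. Since $Ax^* = Ax$ we have $Ah = 0$, so $h$ lies in the kernel of $A$; the goal is to show $h = 0$. The only other piece of information available is optimality: $\|x^*\|_1 \le \|x\|_1$, i.e. $\|x + h\|_1 \le \|x\|_1$. The plan is to combine these two facts and show they force $h = 0$ once the restricted isometry constant is small.

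First I would split coordinates by the support $T = \supp(x)$, $|T| \le s$. From $\|x+h\|_1 \le \|x\|_1$ and the triangle inequality applied on $T$ and $T^c$ separately, one gets the cone condition $\|h_{T^c}\|_1 \le \|h_T\|_1$. Next I would sort the coordinates of $h_{T^c}$ in decreasing order of magnitude and chop $T^c$ into consecutive blocks $T_1, T_2, \dots$ of size $s$ each (with $T_0 := T$). The two key estimates are: (i) a \emph{shelling} bound $\sum_{j \ge 2} \|h_{T_j}\|_2 \le s^{-1/2}\|h_{T^c}\|_1 \le s^{-1/2}\|h_T\|_1 \le \|h_{T_0}\|_2$, using that each entry of $T_{j}$ is dominated by the average of $T_{j-1}$; and (ii) the restricted isometry inequality \eqref{RIP} applied to the $2s$-sparse vectors $h_{T_0 \cup T_1}$ and to each $h_{T_j}$, which turns the identity $0 = Ah = A h_{T_0 \cup T_1} + \sum_{j \ge 2} A h_{T_j}$ into a numerical inequality.

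Concretely, from $A h_{T_0\cup T_1} = -\sum_{j\ge 2} A h_{T_j}$ I would bound
\[
(1-\delta_{2s})\,\|h_{T_0 \cup T_1}\|_2^2 \le \|A h_{T_0\cup T_1}\|_2^2
= \Big\langle A h_{T_0\cup T_1},\, -\sum_{j\ge 2} A h_{T_j}\Big\rangle
\le \|A h_{T_0\cup T_1}\|_2 \sum_{j\ge 2}\|A h_{T_j}\|_2,
\]
and then apply \eqref{RIP} again to each term on the right, so that $\|A h_{T_0\cup T_1}\|_2 \le (1+\delta_{2s})^{1/2}\|h_{T_0\cup T_1}\|_2$ and $\|A h_{T_j}\|_2 \le (1+\delta_{2s})^{1/2}\|h_{T_j}\|_2 \le (1+\delta_{2s})^{1/2}\|h_{T_j}\|_2$ for the singleton blocks. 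Combining with the shelling bound (i) and with $\|h_{T_0}\|_2 \le \|h_{T_0\cup T_1}\|_2$ yields an inequality of the form $(1-\delta_{2s})\|h_{T_0\cup T_1}\|_2 \le (1+\delta_{2s})\,\|h_{T_0\cup T_1}\|_2\cdot(\text{something} < 1)$ once $\delta_{2s}$ is below an explicit threshold; this forces $\|h_{T_0\cup T_1}\|_2 = 0$, hence $h_{T_0} = 0$, and then the cone condition gives $\|h_{T^c}\|_1 \le \|h_T\|_1 = 0$, so $h = 0$ and $x^* = x$.

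The main obstacle — really the only delicate point — is \emph{choosing the constant $c$ and tracking the numerical factors} so that the final contraction inequality genuinely closes: one needs the product of $(1+\delta)/(1-\delta)$ with the shelling constant to be strictly less than $1$, which requires $\delta_{2s}$ below something like $\sqrt{2}-1$ (the exact value depends on how one organizes the shelling and whether one uses blocks of size $s$ or a more refined choice). Everything else is bookkeeping: the cone condition is immediate from optimality, and the shelling bound is a one-line consequence of the monotone rearrangement. I would present the shelling step carefully and then just state that a short computation with $\delta_{2s} \le c$ closes the loop, referring to \cite{CT decoding} for the optimized constant.
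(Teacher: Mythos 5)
The paper itself does not prove this theorem; it only states it with pointers to \cite{Candes sqrt2} (which gives $c = \sqrt{2}-1$) and \cite{CWX} (for the current best constant $0.472$). Your overall plan — null-space cone condition plus block shelling — is the correct one and is indeed the argument of \cite{Candes sqrt2}, and the ingredients you list (the cone condition, the block decomposition of $T^c$, the shelling estimate) are all sound. But there is a genuine gap at the quantitative step, and as written the contraction does not close.

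The problem is the Cauchy--Schwarz bound on the cross term. Push your chain of inequalities to the end: it reads
\[
(1-\delta_{2s})\,\|h_{T_0\cup T_1}\|_2^2 \;\le\; (1+\delta_{2s})\,\|h_{T_0\cup T_1}\|_2\sum_{j\ge 2}\|h_{T_j}\|_2,
\]
and the shelling bound together with the cone condition gives $\sum_{j\ge 2}\|h_{T_j}\|_2 \le \|h_{T_0}\|_2 \le \|h_{T_0\cup T_1}\|_2$ with constant \emph{exactly} $1$ — there is no hidden ``something $<1$''. You are left with $(1-\delta_{2s})\le(1+\delta_{2s})$, which holds for every $\delta_{2s}\ge 0$ and yields no contradiction, no matter how small $c$ is. The missing ingredient is \emph{restricted orthogonality}: for $u,v$ with disjoint supports of total size at most $2s$, the RIP implies via the polarization identity that $|\langle Au, Av\rangle|\le \delta_{2s}\|u\|_2\|v\|_2$. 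Applying this pairwise to $(h_{T_0},h_{T_j})$ and $(h_{T_1},h_{T_j})$ for $j\ge 2$ gives
\[
\|Ah_{T_0\cup T_1}\|_2^2 \;\le\; \delta_{2s}\bigl(\|h_{T_0}\|_2+\|h_{T_1}\|_2\bigr)\sum_{j\ge 2}\|h_{T_j}\|_2 \;\le\; \sqrt{2}\,\delta_{2s}\,\|h_{T_0\cup T_1}\|_2\sum_{j\ge 2}\|h_{T_j}\|_2,
\]
where the right-hand side now carries the \emph{small} factor $\sqrt{2}\,\delta_{2s}$ rather than the large factor $1+\delta_{2s}$. Combining with the lower RIP bound and the same shelling estimate gives $(1-\delta_{2s})\le\sqrt{2}\,\delta_{2s}$ unless $h_{T_0\cup T_1}=0$, which is the contradiction for $\delta_{2s}<\sqrt{2}-1$. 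In short, the cross terms $\langle Ah_{T_i}, Ah_{T_j}\rangle$ ($i\in\{0,1\}$, $j\ge 2$) must be bounded by the restricted orthogonality inequality, not by Cauchy--Schwarz followed by two applications of the upper RIP estimate: using both upper estimates at once is exactly what discards the factor of $\delta$ that makes the argument close.
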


A proof with $c = \sqrt{2}-1$ is given in \cite{Candes sqrt2}; the current record is $c = 0.472$ \cite{CWX}.

Restricted isometry property can be interpreted in terms of the
{\em extreme singular values of submatrices} of $A$.
Indeed, \eqref{RIP} equivalently states that the inequality
$$
\sqrt{1-\d} \le \smin(A_I) \le \smax(A_I) \le \sqrt{1+\d}
$$
holds for all $m \times s$ submatrices $A_I$,
those formed by the columns of $A$ indexed by sets $I$ of size $s$.
In light of Sections~\ref{s: extreme} and \ref{s: smallest},
it is not surprising that the best known restricted isometry matrices
are {\em random matrices}. It is actually an open problem to construct {\em deterministic}
restricted isometry matrices as in Theorem~\ref{RIP matrices} below.

The following three types of random matrices
are extensively used as measurement matrices in compressed sensing:
{\em Gaussian, Bernoulli, and Fourier}. Here we summarize their restricted
isometry properties, which have the common remarkable feature:
{\em the required number of measurements $m$ is roughly proportional to the sparsity level $s$}
rather than the (possibly much larger) dimension $n$.

\begin{theorem}[Random matrices are restricted isometries]          \label{RIP matrices}
Let $m,n,s$ be positive integers, $\e, \d \in (0,1)$,
and let $A$ be an $m \times n$ measurement matrix.

1. Suppose the entries of $A$ are independent and identically distributed
subgaussian random variables with zero mean and unit variance.
Assume that
$$
m \ge C s \log (2n/s)
$$
where $C$ depends only on $\e$, $\d$, and the subgaussian moment.
Then with probability at least $1-\e$,
the matrix $\bar{A} = \frac{1}{\sqrt{m}} A$ is a restricted isometry with $\d_s(\bar{A}) \le \d$.

2. Let $A$ be a random Fourier matrix obtained from the
$n \times n$ discrete Fourier transform matrix by choosing $m$ rows independently
and uniformly. Assume that
\begin{equation}                          \label{log4}
m \ge C s \log^4 (2n).
\end{equation}
where $C$ depends only on $\e$ and $\d$. Then with probability at least $1-\e$,
the matrix $\bar{A} = \frac{1}{\sqrt{n}} A$ is a restricted isometry with $\d_s(\bar{A}) \le \d$.
\end{theorem}

For random subgaussian matrices this result was proved in \cite{BDDW, MPT} by an $\e$-net argument,
where one first checks the deviation inequality $|\|Ax\|_2^2 - 1| \le \d$ with exponentially
high probability for a fixed vector $x$ as in \eqref{RIP}, and afterwards lets $x$ run over
some fine net. For random Fourier matrices the problem is harder. It was first
addressed in \cite{CT Fourier} with a little higher exponent than in \eqref{log4};
the exponent $4$ was obtained in \cite{RV Fourier}, and it is conjectured that the optimal exponent is $1$.

\paragraph{Short Khinchin's inequality and Kashin's subspaces}
Let $1 \le p < \infty$. The classical Khinchin's inequality states that there exist constants $A_p,B_p$
such that for all $x=(x_1, \ldots, x_n) \in \R^n$
\[
 A_p \|x\|_2
 \le  \Big( \Ave_{\e \in \{-1,1\}^n}
  \Big| \sum_{j=1}^n \e_j x_j  \Big|^p \Big)^{1/p}
 \le B_p \|x\|_2.
\]
The average here is taken over all $2^n$ possible choices of signs $\e$ (it is the same as
the expectation with respect to independent Bernoulli random variables $\e_j$).
Since the mid-seventies, the question was around whether Khinchin's inequality
holds for averages over some small sets of signs $\e$.
A trivial lower bound follows by a dimension argument: such a set must contain at least $n$ points.
Here we shall discuss only the case $p=1$, which is of considerable interest for computer science.
This problem can be stated more precisely as follows:
as follows:
\begin{quote}
 Given $\d>0$, find $\a(\d), \b(\d)>0$ and construct a set $V \subset \{-1,1\}^n$ of cardinality
 less than $(1+\d)n$ such that for all $x=(x_1, \ldots, x_n) \in \R^n$
 \begin{equation}     \label{short Khinchin}
   \a(\d) \|x\|_2
   \le  \Ave_{\e \in V}
   \Big| \sum_{j=1}^n \e_j x_j  \Big|
   \le \b(\d) \|x\|_2.
 \end{equation}
\end{quote}
The first result in this direction belongs to Schechtman
\cite{Schechtman} who found an affirmative solution to this problem
for $\d$ greater than some absolute constant. He considered a set
$V$ consisting of $N=\lfloor (1+\d)n \rfloor$ independent random
$\pm 1$ vectors, which can be written as an $N \times n$ random
Bernoulli matrix $A$. In the matrix language, the inequality above
reads $\a(\d) \|x\|_2  \le N^{-1} \| Ax \|_1 \le \b(\d)\|x\|_2 $ for
all $x \in \R^n$. This means that one can take
\[
 \a(\d)= N^{-1} \inf_{x \in S^{n-1}} \|Ax\|_1,
 \quad
 \b(\d)= N^{-1} \sup_{x \in S^{n-1}} \|Ax\|_1.
\]
These expressions bear a similarity to the smallest and the largest singular values of the matrix $A$.
In fact, up to the coefficient $N^{-1}$, $\b(\d)$ is the norm of $A$ considered as a linear operator
from $\ell_2^n$ to $\ell_1^n$, and $\a(\d)$ is the reciprocal of the norm of its inverse. Schechtman's
theorem can now be derived using the $\e$-net argument.

The case of small $\d$ is more delicate. For a random $A$, the bound
for $\b(\d)\le C$ can be obtained by the $\e$-net argument as
before. However, an attempt to apply this argument for $\a(\d)$ runs
into to the same problems as for the smallest singular value. For
any fixed  $\d>0$ the solution was first obtained first by Johnson and
Schechtman \cite{JS} who showed that there {\em exists} $V$
satisfying \eqref{short Khinchin} with $\a(\d)=c^{1/\d}$. In
\cite{LPRTV} this was established for a random set $V$ (or a random
matrix $A$) with the same bound on $\a(\d)$.
Furthermore, the result remains valid even when $\d$ depends on
$n$, as long as $\d \ge c/\log n$.
The proof uses the
smallest singular value bound from \cite{LPRT} in a crucial way. The
bound on $\a(\d)$ has been further improved in \cite{AFMS}, also
using the singular value approach. Finally, a theorem in \cite{R 06}
asserts that for a random set $V$ the inequalities \eqref{short
Khinchin} hold with high probability for 
$$
\a(\d)=c\d^2, \quad 
\b(\d)=C.
$$
Moreover, the result holds for all  $\d>0$ and $n$, without any
restrictions.
The proof
combines the methods of \cite{R square} and a geometric argument
based on the structure of a section of the $\ell_1^n$ ball. The
probability estimate of \cite{R 06} can be further improved if one
replaces the small ball probability bound of \cite{R square} with
that of \cite{RV square}.

The short Khinchin inequality shows also that the $\ell_1$ and $\ell_2$ norms are equivalent on a
random subspace $E:=A \R^n \subset \R^N$. Indeed, if $A$ is an $N \times n$ random matrix, then with
high probability every vector $x \in \R^n$ satisfies
$\a(\d) \|x\|_2 \le N^{-1} \|Ax\|_1 \le N^{-1/2} \|Ax\|_2 \le C \|x\|_2$.
The second inequality here is Cauchy-Schwartz, and the third one is the largest singular value bound.
Thierefore
\begin{equation}          \label{Kashin}
 C^{-1} \a(\d) \|y\|_2 \le N^{-1/2} \|y\|_1 \le  \|y\|_2
 \qquad \text{for all } y \in E.
\end{equation}
Subspaces $E$ possessing property \eqref{Kashin} are called
{\em Kashin's subspaces}. The classical Dvoretzky theorem states that a
high-dimensional Banach space has a subspace which is close to
Euclidean \cite{MS}. The dimension of such subspace depends on the
geometry of the ambient space. Milman proved that such subspaces
always exist in dimension $c\log n$, where $n$ is the dimension of
the ambient space \cite{Milman} (see also \cite{MS}). For the space
$\ell_1^n$ the situation is much better, and such subspaces exist in
dimension $(1-\d)n$ for any constant $\d>0$. This was first proved
by Kashin \cite{Kashin} also using a random matrix argument.
Obviously, as $\d \to 0$, the distance between the $\ell_1$ and
$\ell_2$ norms on such subspace grows to $\infty$. The optimal bound
for this distance has been found by Garnaev and Gluskin \cite{GG}
who used subspaces generated by Gaussian random matrices.

Kashin's subspaces turned out to be useful in theoretical computer science, 
in particular in the nearest neighbor search \cite{Indyk} and in compressed sensing.
At present no deterministic construction is known of such
subspaces of dimension $n$ proportional to $N$. The result
of \cite{R 06} shows that a $\lfloor (1+\d)n \rfloor \times n$
random Bernoulli matrix defines a Kashin's subspace with
$\a(\d)= c \d^2$. A random Bernoulli matrix is computationally
easier to implement than a random Gaussian matrix, while the
distance between the norms is not much worse than in the optimal
case. At the same time, since the subspaces generated by a Bernoulli
matrix are spanned by random vertices of the discrete cube, they
have relatively simple structure, which is possible to analyze.

\end{document}